\theoremstyle{plain}
\newtheorem{theorem}[equation]{Theorem}
\newtheorem{proposition}[equation]{Proposition}
\newtheorem{lemma}[equation]{Lemma}
\newtheorem{corollary}[equation]{Corollary}
\theoremstyle{remark}
\newtheorem{remark}[equation]{Remark}
\numberwithin{equation}{section}
\newcommand{\re}{\text{Re}}
\newcommand{\im}{\text{Im}}
\newcommand{\sh}{{\mathscr H}}
\newcommand{\sL}{{\mathscr L}}
\newcommand{\C}{{\mathbb C}}
\newcommand{\N}{{\mathbb N}}
\newcommand{\R}{{\mathbb R}}
\begin{document}

\title[]{Plurisubharmonic defining functions in $\C^2$}
\author{Luka Mernik}
\address{Department of Mathematics, \newline Oklahoma State University, Stillwater, Oklahoma, USA}
\email{lmernik@okstate.edu}
\keywords{plurisubharmonic defining functions}
 \subjclass[2010]{32T27}

\maketitle
 
\begin{abstract}

Let $\Omega=\{r<0\}\subset\C^2$, with $r$ plurisubharmonic on $b\Omega=\{r=0\}$. Let $\rho$ be another defining function for $\Omega$.
A formula for the determinant of the complex Hessian of $\rho$ in terms of $r$ is computed. This formula is used to give necessary and sufficient conditions that make $\rho$ (locally) plurisubharmonic.

\end{abstract}

\section{Introduction}

A domain $\Omega=\{r<0\}\subset \C^n$ is pseudoconvex if the complex Hessian of $r$ is positive semi-definite for all complex tangent vectors at all boundary points. A stronger property is admitting a plurisubharmonic defining function, that is, the complex Hessian of $r$ is positive semi-definite for all vectors in $\C^n$ at all points in $\Omega$. An intermediary condition is admitting a  defining function plurisubharmonic on the boundary, where the non-negativity of the complex Hessian need only occur at the boundary. 
Although every domain with a  defining function plurisubharmonic on the boundary is pseudoconvex, the converse is not true. Diederich and Fornaess \cite{DieFor77-1}, Fornaess \cite{Fornaess79}, and later Behrens \cite{Behrens85} found examples of weakly pseudoconvex domains in $\C^2$ which do not admit local plurisubharmonic defining functions, even on the boundary.  

The goal is to study the inequivalence of these three intraconnected notions of positivity. In other words, the aim is to understand the (in)ability of ``spreading'' of positivity of the complex Hessian to either non-tangent vectors or  points off the boundary.  Spreading of various kinds of positivity of the Hessian has been studied in \cite{HerMcN09} and  \cite{HerMcN2012}.

Since two kinds of spreading are involved, each is considered separately.
The first step is understanding the speading of the positivity of the complex Hessian from tangent vectors to the ``missing'' normal direction at boundary points. That is, answer the question whether a pseudoconvex domain admits a  defining function plurisubharmonic on the boundary. In \cite{Mernik20}, the author gives 
necessary and sufficient conditions for a pseudoconvex domain $\Omega$ to admit a local defining function plurisubharmonic on the boundary. The following expression for the determinant of the complex Hessian on the boundary was obtained
\begin{proposition}[\cite{Mernik20}]\label{Boundary}
Let $\Omega=\{r<0\}\subset \C^2$ with a smooth defining function $r$ and  let $\rho=r(1+Kr+T)$ be another defining function of $\Omega$. Then
\[\sh_\rho(p)=\sh_{(1+Kr+T)r}(p)=2Kh\sL_r(p)+\sh_{(1+T)r}(p) \text{ \quad for all $p\in b\Omega$,}\] 
where $\sL_r$ is the Levi form and $\sh_f$ is the determinant of the complex Hessian of $f$.
\end{proposition}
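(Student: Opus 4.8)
The plan is to show that for $p\in b\Omega$ the number $\sh_\rho(p)$ depends affinely on $K$, with constant term $\sh_{(1+T)r}(p)$ and slope $2h\sL_r(p)$. Write $h:=1+Kr+T$, so $\rho=rh$ and $h=1+T$ on $b\Omega$, and set $\rho_0:=(1+T)r$, the defining function obtained by discarding the $K$-term, so that $\rho=\rho_0+Kr^2$. Differentiating $Kr^2$ twice, every resulting term still carries a factor $r$ except $2K\,r_j r_{\bar k}$; hence along $b\Omega=\{r=0\}$
\[
\frac{\partial^2\rho}{\partial z_j\,\partial\bar z_k}\bigg|_{b\Omega}=\frac{\partial^2\rho_0}{\partial z_j\,\partial\bar z_k}\bigg|_{b\Omega}+2K\,\frac{\partial r}{\partial z_j}\,\frac{\partial r}{\partial\bar z_k}\bigg|_{b\Omega},
\]
i.e.\ the complex Hessian matrix satisfies $H_\rho=H_{\rho_0}+2K\,(\partial r)(\partial r)^{*}$ on $b\Omega$, a rank-one perturbation of $H_{\rho_0}$. (Alternatively one computes $H_{rh}$ directly, using $h_j=Kr_j+T_j$ on $b\Omega$ together with the identity $h\,r_{j\bar k}+r_j T_{\bar k}+r_{\bar k}T_j=(\rho_0)_{j\bar k}$ there, which holds because $h=1+T$ on $b\Omega$.)

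Next I would apply the matrix determinant lemma, which for a rank-one update is the exact identity $\det(A+uu^{*})=\det A+u^{*}\operatorname{adj}(A)\,u$. With $A=H_{\rho_0}$ and $u=\partial r$ this gives, for $p\in b\Omega$,
\[
\sh_\rho(p)=\sh_{\rho_0}(p)+2K\,(\partial r)^{*}\operatorname{adj}\!\big(H_{\rho_0}\big)(\partial r)\big|_p,
\]
which already exhibits $\sh_\rho(p)$ as affine in $K$ with constant term $\sh_{\rho_0}(p)=\sh_{(1+T)r}(p)$; it remains only to identify the coefficient of $2K$ with $h(p)\,\sL_r(p)$.

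For that I would use the $2\times2$ adjugate identity $u^{*}\operatorname{adj}(A)\,u=\sum_{j,k}A_{jk}\,w_j\overline{w_k}$ with $w=(u_2,-u_1)$ (immediate from $\operatorname{adj}\!\begin{pmatrix}a&b\\c&d\end{pmatrix}=\begin{pmatrix}d&-b\\-c&a\end{pmatrix}$). Taking $u=\partial r=(r_1,r_2)$, the vector $w=(r_2,-r_1)$ satisfies $r_1w_1+r_2w_2=0$, so it spans the complex tangent space of $b\Omega$ at $p$; thus $(\partial r)^{*}\operatorname{adj}(H_{\rho_0})(\partial r)$ is the complex Hessian of $\rho_0=(1+T)r$ contracted against this tangential direction. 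Since $(\rho_0)_{j\bar k}=(1+T)r_{j\bar k}+T_jr_{\bar k}+r_jT_{\bar k}$ on $b\Omega$, contracting with $w_j\overline{w_k}$ annihilates the two cross terms (each carries the factor $r_1w_1+r_2w_2=0$ or its conjugate), leaving $(1+T)\sum_{j,k}r_{j\bar k}w_j\overline{w_k}=(1+T)\,\sL_r(p)=h(p)\,\sL_r(p)$, where $\sL_r$ is the Levi form of $r$ at the tangential vector $w$. Feeding this back into the previous display gives $\sh_\rho(p)=2Kh\,\sL_r(p)+\sh_{(1+T)r}(p)$.

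The second-derivative bookkeeping in the first and third steps is routine. The step that I expect to require genuine care is the identification in the third step: one has to keep the conjugations in the adjugate identity straight, confirm that $w=(r_2,-r_1)$ really is the complex tangential direction so that a Levi form (rather than some unrelated bilinear contraction) appears, and check that exactly the normal-tangential cross terms cancel, so that the scalar left over is precisely $h$ restricted to $b\Omega$, namely $1+T$.
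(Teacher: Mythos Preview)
Your argument is correct and takes a genuinely different route from the paper. The paper (in Section~\ref{S:Hessian}, which generalizes this proposition off the boundary) proceeds by brute-force expansion: it writes $\sh_\rho=K^2\sh_{r^2}+\sh_{Pr}+A$, expands each of the three pieces as explicit polynomials in $r_{z},r_{w},r_{z\bar z},\ldots$ and the corresponding derivatives of $P=1+T$, and then groups by powers of $r$; setting $r=0$ recovers the boundary identity. Your approach instead recognizes that on $b\Omega$ the Hessian $H_\rho$ is a rank-one Hermitian perturbation $H_{\rho_0}+2K(\partial r)(\partial r)^{*}$, invokes the matrix determinant lemma, and then uses the $2\times2$ adjugate identity to rewrite $(\partial r)^{*}\operatorname{adj}(H_{\rho_0})(\partial r)$ as $H_{\rho_0}(L_r,L_r)$, which collapses to $(1+T)\sL_r$ because the tangential contraction kills the cross terms.

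Your route is cleaner and more conceptual for the boundary statement, and it makes transparent \emph{why} the Levi form appears (the adjugate swaps normal for tangential). The paper's direct expansion is heavier but pays off differently: it yields the full interior formula \eqref{Hessian2} organized by powers of $r$, with the explicit error terms $B_P$ and $Q_P$, which is exactly what is needed for the Taylor analysis in later sections. Off the boundary the perturbation $H_\rho-H_{\rho_0}$ is no longer rank one (the term $2Kr\,r_{j\bar k}$ survives), so your determinant-lemma shortcut does not extend directly, whereas the expansion does.
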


The second step involves spreading the positivity of the complex Hessian from the boundary and inside the domain. Namely, given a defining function plurisubharmonic on the boundary, does there exists a plurisubharmonic defining function and if so what modifications need to be made?
The goal of this paper is to generalize the Proposition \ref{Boundary}, by deriving a formula for $\sh_\rho$ that holds inside the domain as well. Let $\rho=r(1+Kr+X)=r(Kr+P)$. Then, as shown in Section \ref{S:Hessian},
\begin{align}\label{HessianIntro}
\sh_\rho=&\left(2KPH_r(L_r,L_r)+P^2\sh_r+2P\re[H_r(L_r,L_P)]+B_P\right)\notag\\
&+ r\bigg(4K^2H_r(L_r,L_r)+PQ_P+2\re[H_P(L_r,L_P)]+4KP\sh_r+4K\re[H_r(L_r,L_P)]\notag\\ 
&+2KH_P(L_r,L_r)\bigg)+r^2\left(4K^2\sh_r+\sh_P+2KQ_P\right),
\end{align}
where $H_f(V,W)$ is the complex Hessian of $f$ acting on vectors $V$ and $W$. The terms $B_P$ and $Q_P$ are  ``error'' terms to be defined later. These terms cannot, in particular, be written in terms of $H_f$ or $\sh_f$ for a relevant function $f$.

Under hypotheses of interest, many terms in \eqref{HessianIntro} can only be directly controlled on $b\Omega$. Taylor's formula, centered at a boundary point $p$ and used to compute $\sh_\rho$ at points $q\in \bar\Omega$ in the (real) normal direction from $p$, is the main analytical device used to pass information from $b\Omega$ into $\Omega$.

In Section \ref{S:pshonboundary} the Taylor expansion of $\sh_\rho$ is studied in greater detail. Assuming that
$\Omega$ admits a smooth  defining function plurisubharmonic on the boundary near $p\in b\Omega$ provides enough control on terms in \eqref{HessianIntro} to yield necessary and sufficient conditions on $X$ such that $\rho=r(1+Kr+X)$ is plurisubharmonic in a neighborhood of $p$. A difference between producing a plurisubharmonic defining function in a neighborhood of strongly and weakly pseudoconvex points is also observed. 
Section \ref{S:Examples} demonstrates the method through an example. 

In Section \ref{S:HOTS} higher order Taylor expansion is computed.

 A similar problem was considered by Liu \cite{Liu19-1},\cite{Liu19-2}. Recall that a Diederich-Fornaess exponent of $\Omega\subset\subset\C^n$ is a number $\eta\in(0,1]$ for which there exists a smooth defining function $\rho$ such that $-(-\rho)^\eta$ is strictly plurisubharmonic. Liu constructs an equation similar to  \eqref{HessianIntro} in order to control the size of such exponents. However factors of size $\frac 1{1-\eta}$ in the equation prevent its use in determining when $\Omega$ admits an actual plurisubharmonic defining function; or in other words has Diederich-Fornaess exponent exactly $1$. This is precisely the case detailed in this paper.

The author would like to thank Jeffery McNeal for numerous discussions about the topic of this paper and helpful comments while preparing this manuscript.

\section{Preliminaries}\label{S:Prel}

Notation and basic facts that used throughout the paper are recorded. Partial derivatives will be denoted with subscripts, e.g., $r_{z_j}=\frac{\partial r}{\partial z_j}$.
A defining function for $\Omega\subset \C^2$  is a function $r$ such that $\Omega=\{(z,w)\in \C^2: r(z,w)<0\}$ and $\nabla r\neq(0,0)$ on the boundary. 
Throughout the paper, $r$ is assumed to be smooth. However, smoothness can be relaxed to $C^2$ in most of the results.

If $p\in b\Omega$, translating coordinates reduces to considering $p=(0,0)$ is the origin. A further rotation produces 
\begin{align}\label{deffn}
r(z,w)=&\im w+F(z,w), \text{ for some real-valued $F$ with  } \\
F(0,0)=&0 \text{ and } \nabla F(0,0)=(0,0)\notag
\end{align}
Then $r_w(0,0)=\frac1{2i}$.

Let $H_f=\begin{pmatrix}
f_{z\bar z} & f_{z\bar w}\\
f_{\bar zw} & f_{w\bar w}
\end{pmatrix}$ denote the complex Hessian of $f$.  Denote $H_f$ acting on vectors $V=\langle V_1,V_2\rangle$ and $W=\langle W_1,W_2\rangle$  by
$$H_f(V,W)=VH_f\bar W=f_{z\bar z}V_1\bar W_1+f_{w\bar w}V_2\bar W_2+f_{z\bar w}V_1\bar W_2+f_{\bar zw}V_2\bar W_1.$$
The determinant of $H_f$ is denoted $\sh_f=\det H_f$.

Let $L_f=\frac{\partial f}{\partial w}\frac{\partial }{\partial z}- \frac{\partial f}{\partial z}\frac{\partial }{\partial w}$ and 
$N_f=\frac{\partial f}{\partial \bar z}\frac{\partial }{\partial z}+\frac{\partial f}{\partial \bar w}\frac{\partial }{\partial w}$. Then $L_r$ is the complex tangential and $N_r$ is the complex normal direction to the boundary. Furthermore, 
$$H_r(L_r,L_r)(p):=\sL_r(p)=r_{z\bar z}|r_w|^2+r_{w\bar w}|r_z|^2-2\re[r_{z\bar w}r_{\bar z}r_w]\big|_p$$ 
is the Levi form at the boundary point $p\in b\Omega$.

A function $f$ is plurisubharmonic if $H_f$ is a positive semi-definite matrix. By Sylvester's criterion \cite{Gilbert91}, $f$ is plurisubharmonic if $\sh_f=\det(H_f)\geq0$ and $f_{z\bar z},f_{w\bar w}\geq0$.

Big O notation is denoted by $\mathcal O$ with the asymptotics occuring at the origin, that is, $f(z)=\mathcal O(g(z))$ if there exists constants $M,\delta>0$ such that 
$$|f(z)|<M|g(z)|, \text{ when $0<|z|<\delta$.}$$

Finally, a version of Taylor's theorem  will be used extensively.
Since $b\Omega$ is smooth, there exists a neighborhood $U$ of $b\Omega$ and a smooth map
\begin{align*}
\pi:\bar\Omega\cap U\rightarrow& b\Omega\\
q\longmapsto& \pi(q)=p
\end{align*}
such that $p\in b\Omega$ lies on the (real) normal to $b\Omega$ passing through $q$. Let $d_{b\Omega}(q)$ be the complex euclidean distance of $q$ to $b\Omega$. Then $q=p-\frac{d_{b\Omega}(q)}{|\partial r(p)|}N_r(p)$.
Let $f\in C^2(\bar\Omega)$, $q\in \bar \Omega\cap U$, and $p=\pi(q)$. Taylor's formula in complex notation says
\begin{align*}
f(q)=&f(p)+ f_z(p)(q_1-p_1)+f_w(p)(q_2-p_2)+f_{\bar z}(p)(\bar q_1-\bar p_1)+f_{\bar w}(p)(\bar q_2-\bar p_2) +\mathcal O(d_{b\Omega}^2)\\
=&f(p) - \frac{d_{b\Omega}(q)}{|\partial r(p)|} \left[ r_{\bar z}(p)f_z(p)+r_{\bar w}(p) f_w(p) +r_z(p)f_{\bar z}(p)+r_w(p)f_{\bar w}(p)\right] +\mathcal O(d_{b\Omega}^2)\\
=&f(p) -  2\frac{d_{b\Omega}(q)}{|\partial r(p)|}[(\re N)(f)](p)+\mathcal O(d_{b\Omega}^2)
\end{align*}
Since $-\frac{d_{b\Omega}}{|\partial r|}$ is another defining function for $\Omega$, there exists a positive real-valued function $u$ such that 
$-\frac{d_{b\Omega}}{|\partial r|}=u\cdot r$.
Therefore Taylor's formula can be written as
\begin{align}\label{Taylor}
f(q)=f(p) +2u(q) r(q) [(\re N_r)f](p) +\mathcal O(r^2).
\end{align}
If $f$ is real-valued,  \eqref{Taylor} becomes
$$f(q)=f(p)+2u(q)r(q)\re[N_rf](p)+\mathcal O(r^2).$$

\section{Determinant of the complex Hessian}\label{S:Hessian}

An arbitrary defining function for $\Omega$ is necessarily a multiple of $r$, i.e.,  $\rho=r\cdot h$ for some real-valued positive function $h$.
By rescaling write $$h=1+Kr+X$$ for $K\in\R$ and $X$ a real-valued function with $X(0,0)=0$. This decomposition is not unique, but  we are interested in properties $X$ needs to satisfy 
so that $\rho=r(1+Kr+X)$ is plurisubharmonic.
For brevity write $P=1+X$. Note that $P>0$ in a sufficiently small neighborhood of the origin.

In this section the  determinant of the complex Hessian of $\rho$ is computed in terms of $r$ and $P$. This formula is the basis for most of the simplifications in this paper.
\begin{align}\label{Hessian}
\sh_\rho=&\rho_{z\bar z}\rho_{w\bar w} - |\rho_{z\bar w}|^2\notag\\
=& \left((Kr^2)_{z\bar z}+(Pr)_{z\bar z}\right)\left((Kr^2)_{w\bar w}+(Pr)_{w\bar w}\right) - \left|(Kr^2)_{z\bar w}+(Pr)_{z\bar w}\right|^2\notag\\
=& (Kr^2)_{z\bar z}(Kr^2)_{w\bar w}+ (Pr)_{z\bar z}(Kr^2)_{w\bar w}+(Pr)_{w\bar w}(Kr^2)_{z\bar z}+(Pr)_{z\bar z}(Pr)_{w\bar w} \notag\\
&-|(Kr^2)_{z\bar w}|^2-|(Pr)_{z\bar w}|^2 -2\re[(Kr^2)_{z\bar w}(Pr)_{\bar zw}]\notag\\
=&K^2\sh_{r^2} + \sh_{Pr} +\underbrace{K\left((Pr)_{z\bar z}(r^2)_{w\bar w}+(Pr)_{w\bar w}(r^2)_{z\bar z}-2\re[(r^2)_{z\bar w}(Pr)_{\bar zw}]\right)}_{A}.
\end{align}

Consider each term in \eqref{Hessian} separately and organize them in terms of powers of $r$.
The first term is
\begin{align}\label{HessianRR}
K^2 \sh_{r^2}=& K^2 (r^2)_{z\bar z}(r^2)_{w\bar w}-|(r^2)_{z\bar w}|^2\notag\\
=&(2rr_{z\bar z}+2|r_z|^2)(2rr_{w\bar w}+2|r_w|^2)-|2rr_{z\bar w}+2r_zr_{\bar w}|^2\notag\\
=&4K^2(r^2r_{z\bar z}r_{w\bar w} +rr_{z\bar z}|r_w|^2 +rr_{w\bar w}|r_z|^2+|r_z|^2|r_w|^2\notag\\
& \qquad\quad- r^2|r_{z\bar w}|^2-|r_z|^2|r_w|^2-2\re[rr_{z\bar w}r_{\bar z}r_w] )\notag\\
=&r(4K^2 H_r(L_r,L_r))+r^2(4K^2\sh_{r}).
\end{align}

The second term is
\begin{align}\label{HessianPR}
\sh_{Pr}=&(Pr)_{z\bar z}(Pr)_{w\bar w}-|(Pr)_{z\bar w}|^2   \notag\\
=& (Pr_{z\bar z}+2\re[r_zP_{\bar z}]+rP_{z\bar z})(Pr_{w\bar w} +2\re[r_wP_{\bar w}]+rP_{w\bar w})\notag\\
&-|Pr_{z\bar w}+r_zP_{\bar w}+r_{\bar w}P_z+rP_{z\bar w}|^2\notag\\
=& P^2r_{z\bar z}r_{w\bar w}+2Pr_{z\bar z}\re[r_wP_{\bar w}]+Prr_{z\bar z}P_{w\bar w} 
+2Pr_{w\bar w}\re[r_zP_{\bar z}]+4\re[r_zP_{\bar z}]\re[r_wP_{\bar w}]\notag\\ 
&+2rP_{w\bar w}\re[r_zP_{\bar z}] +Prr_{w\bar w}P_{z\bar z}+2rP_{z\bar z}\re[r_wP_{\bar w}] + r^2P_{z\bar z}P_{w\bar w}
- P^2|r_{z\bar w}|^2 - r^2|P_{z\bar w}|^2\notag\\
&-|r_zP_{\bar w}+r_{\bar w}P_z|^2 -2\re[Pr_{z\bar w}(r_{\bar z}P_w+r_wP_{\bar z})] -2\re[Prr_{z\bar w}P_{\bar zw}] \notag\\
&-2\re[rP_{z\bar w}(r_{\bar z}P_w+r_wP_{\bar z})]\notag\\
=& P^2\left(r_{z\bar z}r_{w\bar w}- |r_{z\bar w}|^2\right) + 2P\left(r_{z\bar z}\re[r_wP_{\bar w}]+r_{w\bar w}\re[r_zP_{\bar z}]-\re[r_{z\bar w}(r_{\bar z}P_w+r_wP_{\bar z})]\right) \notag\\
&+2r\left(P_{w\bar w}\re[r_zP_{\bar z}]+P_{z\bar z}\re[r_wP_{\bar w}]-\re[P_{z\bar w}(r_{\bar z}P_w+r_wP_{\bar z})]\right) 
+r^2\left(P_{z\bar z}P_{w\bar w}-|P_{z\bar w}|^2\right)\notag\\
&+Pr(\underbrace{r_{z\bar z}P_{w\bar w}+r_{w\bar w}P_{z\bar w}-2\re[r_{z\bar w}P_{\bar zw}]}_{Q_P}) 
+\underbrace{4\re[r_zP_{\bar z}]\re[r_wP_{\bar w}]-|r_zP_{\bar w}+r_{\bar w}P_z|^2}_{B_P} \notag\\
=&\left(P^2\sh_r+ 2P\re[H_r(L_r,L_P)] +B_P\right)+r\left(PQ_P+2\re[H_P(L_r,L_P)]\right)+r^2\left (\sh_P\right) .
\end{align}

And finally
\begin{align}\label{HessianA}
A=&K\left((Pr)_{z\bar z}(r^2)_{w\bar w}+(Pr)_{w\bar w}(r^2)_{z\bar z}-2\re[(Pr)_{\bar zw}(r^2)_{z\bar w}]\right) \notag\\
=&K\big((Pr_{z\bar z}+2\re[r_zP_{\bar z}]+rP_{z\bar z})(2rr_{w\bar w}+2|r_w|^2)\notag\\
&+(Pr_{w\bar w}+2\re[r_wP_{\bar w}]+rP_{w\bar w})(2rr_{z\bar z}+2|r_z|^2)\notag\\
&-2\re[(Pr_{\bar zw}+r_{\bar z}P_w+r_wP_{\bar z}+rP_{\bar zw})(2rr_{z\bar w}+2r_zr_{\bar w})]\big) \notag\\
=&2K\big( Prr_{z\bar z}r_{w\bar w}+Pr_{z\bar z}|r_w|^2+2rr_{w\bar w}\re[r_zP_{\bar z}] +\cancel{2|r_w|^2\re[r_zP_{\bar z}]}
+r^2r_{w\bar w}P_{z\bar z}+rP_{z\bar z}|r_w|^2 \notag\\
&+Prr_{w\bar w}r_{z\bar z}+Pr_{w\bar w}|r_z|^2+2rr_{z\bar z}\re[r_wP_{\bar w}]+\cancel{2|r_z|^2\re[r_wP_{\bar w}]} 
+r^2r_{z\bar z}P_{w\bar w}+rP_{w\bar w}|r_z|^2 \notag\\
&-2\re[Pr|r_{z\bar w}|^2]-2\re[Pr_{\bar z w}r_zr_{\bar w}]-2\re[rr_{z\bar w}(r_{\bar z}P_w+r_wP_{\bar z})] \notag\\
&\cancel{- 2\re[|r_z|^2P_wr_{\bar w}]}\cancel{-2\re[|r_w|^2r_zP_{\bar z}]}-2\re[r^2r_{z\bar w}P_{\bar zw}]-2\re[rP_{\bar zw}r_zr_{\bar w}]\big)\notag\\
=&2K\big( Pr\left(2r_{z\bar z}r_{w\bar w}-2|r_{z\bar w}|^2\right)+P\left(r_{z\bar z}|r_w|^2+r_{w\bar w}|r_z|^2 -2\re[r_{\bar z w}r_zr_{\bar w}]\right) \notag\\
&+ 2r\left(r_{w\bar w}\re[r_zP_{\bar z}]+r_{z\bar z}\re[r_wP_{\bar w}] -2\re[r_{z\bar w}(r_{\bar z}P_w+r_wP_{\bar z})]\right) \notag\\
&+r^2\left(r_{w\bar w}P_{z\bar z}+r_{z\bar z}P_{w\bar w}-2\re[r_{z\bar w}P_{\bar zw}]\right)
+r\left(P_{z\bar z}|r_w|^2+P_{w\bar w}|r_z|^2-2\re[P_{\bar z w}r_zr_{\bar w}]\right)\big)\notag\\
=&2KPH_r(L_r,L_r) + r\left(4KP\sh_r+4K\re[H_r(L_r,L_P)]+2KH_P(L_r,L_r)\right) +r^2\left(2KQ_P\right).
\end{align}

Substituting \eqref{HessianRR}, \eqref{HessianPR}, and \eqref{HessianA} into \eqref{Hessian},
\begin{align}\label{Hessian2}
\sh_\rho=&\left(2KPH_r(L_r,L_r)+P^2\sh_r+2P\re[H_r(L_r,L_P)]+B_P\right)\notag\\
&+ r\big(4K^2H_r(L_r,L_r)+PQ_P+2\re[H_P(L_r,L_P)]+4KP\sh_r+4K\re[H_r(L_r,L_P)]\notag\\ 
&+2KH_P(L_r,L_r)\big)+r^2\left(4K^2\sh_r+\sh_P+2KQ_P\right).
\end{align}

Now apply \eqref{Taylor} to each relevant term in \eqref{Hessian2} up to power $r^2$
\begin{align}\label{Hessian3}
\sh_\rho(q)=&2KP(q)H_r(L_r,L_r)(p)+4KP(q)r(q)u(q)\re[N_rH_r(L_r,L_r)](p)\notag\\
&+P^2(q)\sh_r(p)+2P^2(q)r(q)u(q)\re[N_r\sh_r](p) \notag\\
&+2P(q)\re[H_r(L_r,L_P)](p)+4P(q)r(q)u(q)\re[N_r\re[H_r(L_r,L_P)]](p)\notag\\
&+B_P(p)+2r(q)u(q)\re[N_rB_P](p)\notag\\
&+r(q)\big(4K^2 H_r(L_r,L_r)(p)+ P(q)Q_P(p)+2\re[H_P(L_r,L_P)](p)+4KP(q)\sh_r(p)\notag\\
&+4K\re[H_r(L_r,L_P)](p)+2KH_P(L_r,L_r)(p)\big) +\mathcal O(r^2).
\end{align}
For brevity, drop the point $q$ from notation. Recall that $H_r(L_r,L_r)(p)=\sL_r(p)$. Combining like powers of $r$ in \eqref{Hessian3}
\begin{align}\label{HessianFinal}
\sh_\rho(q)=& \bigg(2KP\sL_r(p)+P^2\sh_r(p)+2P\re[H_r(L_r,L_P)](p)+B_P(p)\bigg) \notag\\
+r&\bigg(4KPu\re[N_rH_r(L_r,L_r)](p)+2P^2u\re[N_r\sh_r](p) \notag\\
&+4Pu\re[N_r\re[H_r(L_r,L_P)]](p)+2u\re[N_rB_P](p) \notag\\
&+4K^2\sL_r(p)+PQ_P(p)+2\re[H_P(L_r,L_P)](p)+4KP\sh_r(p) \notag\\
&+4K\re[H_r(L_r,L_P)](p)+2KH_P(L_r,L_r)(p)\bigg)+\mathcal O(r^2)
\end{align}
is obtained. For clarity: all functions in \eqref{HessianFinal} not explicitly evaluated are evaluated at $q$.

\section{Domains with a local plurisubharmonic defining function on the boundary}\label{S:pshonboundary}

Now suppose $\Omega\subset \C^2$ admits a defining function $r$ that is plurisubharmonic on $b\Omega$. 
Then $H_r(V,V)(p)\geq0$ for all vectors $V\in \C^2$ and $p\in b\Omega$; in particular 
$$H_r(L_r,L_r)(p)=\sL_r(p)\geq0.$$ Say that $p$ is a weakly pseudoconvex point if 
$\sL_r(p)=0$ and a strongly pseudoconvex point if $\sL_r(p)>0$. Strongly and weakly pseudoconvex will be considered separately.

Let $$\rho=r(1+Kr+X).$$ The objective is to find conditions on functions $P=1+X$ that make $\rho$ plurisubharmonic. Computing $\rho_{w\bar w}$ gives
\begin{align}
\rho_{w\bar w}=&(1+Kr+X)r_{w\bar w}+2\re[r_w(Kr_{\bar w}+X_{\bar w})]+r(Kr_{w\bar w}+X_{w\bar w}),\notag
\end{align}
and evaluation at the origin yields
\begin{align}
\rho_{w\bar w}(0,0)=&r_{w\bar w}(0,0) +2K|r_w(0,0)|^2+2\re[r_w(0,0)X_{\bar w}(0,0)] \notag\\
=&r_{w\bar w}(0,0)+\frac{K}{2} +2\re[\frac1{2i}X_{\bar w}(0,0)]\notag.
\end{align}
For any $C>0$, $\rho_{w\bar w}(0,0)>2C>0$ if $K>0$ is chosen big enough. Therefore 
\begin{align}\label{RHOww}
\rho_{w\bar w}>C>0
\end{align}
 in a sufficiently small neighborhood of the origin, if $K>0$ large enough. Consequently, focus can be turned to making $\sh_\rho\geq0$.

Consider the constant terms (with respect to $r$) in \eqref{HessianFinal}. Define $$G_{\rho}(q):=2KP(q)\sL_r(p)+P^2(q)\sh_r(p)+2P(q)\re[H_r(L_r,L_P)](p)+B_P(p)$$ for $q\in\bar \Omega$ with $\pi(q)=p$. Then $$\sh_\rho(q)=G_{\rho}(q)+\mathcal O(r).$$

\begin{proposition}\label{P:N1}
Suppose that $\rho$ is plurisubharmonic. 
Then \begin{enumerate}\item $G_\rho(p)\geq0$ for all boundary points $p\in b\Omega$, and 
\item  if $G_\rho(p_0)>0$ for $p_0\in b\Omega$, then $\rho$ is plurisubharmonic in a neighborhood of $p_0$.
\end{enumerate}
\end{proposition}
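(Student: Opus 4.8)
The plan is to read both assertions off the expansion \eqref{HessianFinal}, in which $G_\rho$ is by construction exactly the part of $\sh_\rho$ of order zero in $r$, together with the positivity \eqref{RHOww} of $\rho_{w\bar w}$ and Sylvester's criterion.

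For part (1), I would fix $p\in b\Omega$. Since $r(p)=0$, every term of \eqref{HessianFinal} that carries a factor $r$, as well as the remainder $\mathcal O(r^2)$, vanishes at $p$; hence $\sh_\rho(p)$ equals the constant-in-$r$ part of \eqref{HessianFinal}, which is precisely $G_\rho(p)$. On the other hand, $\rho$ plurisubharmonic forces $H_\rho(p)$ to be positive semi-definite, so $\det H_\rho(p)=\sh_\rho(p)\ge 0$. Combining the two, $G_\rho(p)\ge 0$, and since $p$ was an arbitrary boundary point this is (1).

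For part (2), assume $G_\rho(p_0)>0$. First I would note that $G_\rho$ is continuous on $\bar\Omega$ near $p_0$ --- the functions $r$, $P=1+X$, the projection $\pi$, and all derivatives entering the definition of $G_\rho$ are continuous --- so there are a neighborhood $V$ of $p_0$ in $\bar\Omega$ and a constant $c>0$ with $G_\rho\ge c$ on $V$. Next, by \eqref{HessianFinal} the difference $\sh_\rho-G_\rho$ is $\mathcal O(r)$, with the implied constant uniform on a compact neighborhood $W$ of $p_0$: it is built from the Taylor remainder in \eqref{Taylor} and from the coefficient of $r$ in \eqref{HessianFinal}, all of whose ingredients are bounded near $p_0$. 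Thus $|\sh_\rho(q)-G_\rho(q)|\le M|r(q)|$ for $q\in W$ and some $M>0$. Since $r(p_0)=0$, I would then pass to a neighborhood $U\subseteq V\cap W$ of $p_0$ on which $|r|<c/M$; there $\sh_\rho\ge G_\rho-M|r|>0$. Finally, normalizing coordinates so that $p_0$ is the origin and taking $K$ large enough that \eqref{RHOww} applies, $\rho_{w\bar w}>0$ on a (possibly smaller) $U$; combined with $\sh_\rho=\rho_{z\bar z}\rho_{w\bar w}-|\rho_{z\bar w}|^2\ge 0$ this forces $\rho_{z\bar z}\ge 0$ as well, and Sylvester's criterion then shows $\rho$ is plurisubharmonic on $U$.

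The step I expect to demand the most care is the uniformity of the $\mathcal O(r)$ error over a full neighborhood of $p_0$ in $\bar\Omega$, rather than merely along a single inward normal segment from a fixed boundary point; this is exactly where one uses that \eqref{HessianFinal} was derived from \eqref{Taylor} with a remainder controlled by second derivatives, which are bounded on compact sets. The remaining ingredients --- continuity of $G_\rho$, the vanishing of $r$ on $b\Omega$, and Sylvester's criterion --- are routine.
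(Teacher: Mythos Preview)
Your proof is correct and follows the paper's approach for part (1) verbatim. For part (2) your argument also works, but the paper takes a shorter route that avoids precisely the uniformity concern you flag: since $p_0\in b\Omega$ and $r(p_0)=0$, one has $\sh_\rho(p_0)=G_\rho(p_0)>0$ directly, and then continuity of $\sh_\rho$ itself (which is a smooth function of $q$, being a polynomial in derivatives of $\rho$) gives $\sh_\rho>0$ on a full neighborhood --- no need to separately control $G_\rho$ and an $\mathcal O(r)$ error. Both routes then finish identically via \eqref{RHOww} and Sylvester; the paper also remarks that increasing $K$ can only help here, since $\sL_r(p_0)\geq 0$ and $P(p_0)>0$ make the $K$-dependent part of $G_\rho$ nondecreasing in $K$.
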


\begin{proof}
By assumption, $\rho$ is plurisubharmonic if $p\in b\Omega$. Evaluating \eqref{HessianFinal} at $p\in b\Omega$ yields
\[0\leq \sh_\rho(p)=2KP(p)\sL_r(p)+P^2(p)\sh_r(p)+2P(p)\re[H_r(L_r,L_P)](p)+B_P(p)=G_\rho(p). \]

For (2) assume $G_\rho(p_0)>0$. Then $\sh_{\rho}(p_0)>0$ and continuity shows there exists a neighborhood $U$ of $p_0$ such that $\sh_\rho(q)>0$ for all $q\in U$. 
Since $\sL_r(p_0)\geq0$ and $P(p_0)>0$, increasing $K>0$ will not affect $\sh_\rho\geq 0$.
With \eqref{RHOww}, this shows $\rho$ is plurisubharmonic for $K>0$ large enough, in a sufficiently small neighborhood of $p_0$.
\end{proof}

\subsection{Strongly pseudoconvex points}
\begin{corollary}\label{C:S1}  Let $\Omega=\{r<0\}\subset \C^2$ with a  defining function plurisubharmonic on the boundary $r$. Let $p\in b\Omega$ be a strongly pseudoconvex point. Then there exists a neighborhood $U$ of $p$ such that $r$ is plurisubharmonic at every point in $U$. In particular, $r$ is plurisubharmonic on $\{q\in U\cap\Omega: \pi(q)=p\}$.
\end{corollary}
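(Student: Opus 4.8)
The plan is to read the statement off the Hessian identity \eqref{HessianFinal} applied to the trivial modification $\rho=r$, i.e.\ $K=0$, $X=0$, so that $P\equiv 1$. For this choice $L_P\equiv 0$, $H_P\equiv 0$, $Q_P\equiv 0$, $B_P\equiv 0$ and $\sh_P\equiv 0$, so every term of \eqref{HessianFinal} drops except the $\sh_r$–terms: $G_\rho\equiv \sh_r$, $\sh_\rho\equiv \sh_r$, and the identity collapses to the bare Taylor expansion $\sh_r(q)=\sh_r(p)+2u(q)r(q)\,\re[N_r\sh_r](p)+\mathcal O(r^2)$ coming from \eqref{Taylor}. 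Equivalently, this is the instance $\rho=r$ of Proposition~\ref{P:N1}, for which $G_\rho(p)=\sh_r(p)$.

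The heart of the matter is to show that at a strongly pseudoconvex point $G_\rho(p)=\sh_r(p)>0$. Since $r$ is plurisubharmonic on $b\Omega$, $H_r(p)$ is positive semidefinite, whence $\sh_r(p)\ge 0$ and $r_{z\bar z}(p),r_{w\bar w}(p)\ge 0$; and in the normalized coordinates \eqref{deffn} the hypothesis $\sL_r(p)>0$ reads $r_{z\bar z}(p)=4\sL_r(p)>0$, so in particular $H_r(p)\neq 0$. The only thing to exclude is that the positive semidefinite form $H_r(p)$ has a null vector, i.e.\ $\sh_r(p)=0$. This is the step where strong, rather than mere, pseudoconvexity enters, and it is the genuine obstacle. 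The Taylor expansion above is the tool: if $\sh_r(p)=0$ then $\sh_r|_{b\Omega}$ attains its minimum $0$ at $p$, so $\nabla\sh_r(p)$ is a real multiple of $\nabla r(p)$, say $\nabla\sh_r(p)=\kappa\,\nabla r(p)$, giving $\re[N_r\sh_r](p)=\kappa|\partial r(p)|^2$; the expansion then controls $\sh_r$ along the inward normal through $p$ through the single constant $\kappa$, and one argues that the sign of $\kappa$ forced by $\sh_r\ge 0$ on $b\Omega$ reduces this to the non‑degenerate case $\sh_r(p)>0$.

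Once $\sh_r(p)>0$ is in hand, the rest is a soft continuity argument together with Sylvester's criterion. By continuity $\sh_r>0$ on a neighborhood $U$ of $p$; moreover $r_{z\bar z}(p)>0$ and $r_{w\bar w}(p)=\bigl(\sh_r(p)+|r_{z\bar w}(p)|^2\bigr)/r_{z\bar z}(p)>0$, so after shrinking $U$ we also have $r_{z\bar z}>0$ and $r_{w\bar w}>0$ on $U$. Hence $H_r$ is (strictly) positive definite on $U$, so $r$ is plurisubharmonic at every point of $U$. The last assertion is immediate, since $\{q\in U\cap\Omega:\pi(q)=p\}\subseteq U$.
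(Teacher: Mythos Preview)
Your overall target—show $\sh_r(p)>0$ and then invoke continuity together with Sylvester's criterion—matches the paper's one-line proof, which simply asserts that plurisubharmonicity of $r$ on $b\Omega$ plus strong pseudoconvexity at $p$ make $r$ \emph{strictly} plurisubharmonic at $p$ (hence $\sh_r(p)>0$), and concludes by continuity. The paper does not route the argument through the Taylor expansion \eqref{HessianFinal} or Proposition~\ref{P:N1}; that machinery is unnecessary here, and your first paragraph, while correct, only reproduces the tautology $\sh_\rho=\sh_r$ when $\rho=r$.

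Your attempt to \emph{justify} $\sh_r(p)>0$ has a genuine gap. The sentence ``one argues that the sign of $\kappa$ forced by $\sh_r\ge 0$ on $b\Omega$ reduces this to the non-degenerate case $\sh_r(p)>0$'' is not an argument: the constraint $\sh_r\ge 0$ on $b\Omega$ is tangential second-order information at a boundary minimum and places no restriction whatsoever on the normal derivative $\kappa$. Worse, the conclusion you are aiming for is false in general. Take $r(z,w)=\im w+|z|^2$: then $H_r\equiv\left(\begin{smallmatrix}1&0\\0&0\end{smallmatrix}\right)$ is positive semidefinite everywhere (so $r$ is plurisubharmonic on $b\Omega$), every boundary point is strongly pseudoconvex since $\sL_r=r_{z\bar z}|r_w|^2>0$, yet $\sh_r\equiv 0$. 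No Lagrange-multiplier or Taylor manipulation can manufacture $\sh_r(p)>0$ here, and your final Sylvester paragraph—which deduces $r_{w\bar w}(p)>0$ from $\sh_r(p)>0$—collapses as well. (The same example shows that the paper's bare assertion is also too strong as literally written; the corollary's conclusion nevertheless holds in this example because $r$ is plurisubharmonic everywhere.)
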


\begin{proof}

Since $r$ is plurisubharmonic and $p$ is strongly pseudoconvex point, $r$ is strongly plurisubharmonic at $p$, that is, $\sh_r(p)>0$. The result follows from continuity. 

\end{proof}

\begin{remark}
Given enough positivity of $G_\rho(p)-2KP(p)\sL_r(p)$ and $\rho_{w\bar w}(p)$,  $K<0$ can be chosen as well. 
\end{remark}

Corollary \ref{C:S1} shows that for points $q\in \Omega$ with $\pi(q)=p$ where $p$ is a strongly pseudoconvex point, the determinant of the complex Hessian can be always made positive in a small neighborhood of $p$ no matter the choice of a real-valued function $X$. 

This recovers, from a different viewpoint, the $n=2$ case of a result of Kohn:
\begin{theorem}[Kohn \cite{Kohn73}]  Let $\Omega=\{r<0\}\subset \C^n$ be a strongly pseudoconvex domain with a defining function $r$. 
Then $\rho=r(1+Kr)$ is a plurisubharmonic defining function for some $K>0$.
\end{theorem}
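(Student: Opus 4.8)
The plan is to show that for a strongly pseudoconvex domain, the function $r$ itself (or more precisely a small perturbation) already satisfies the hypotheses needed to invoke the machinery above, so that $\rho = r(1+Kr)$ works for large $K$. First I would observe that for a strongly pseudoconvex domain we may, after possibly shrinking to a neighborhood of any boundary point and using a partition-of-unity / globalization argument, assume that $r$ is strongly plurisubharmonic on $b\Omega$; in fact the standard fact is that one can choose the defining function so that $\sh_r(p) > 0$ and $r_{z\bar z}(p), r_{w\bar w}(p) > 0$ at every $p \in b\Omega$ (this is essentially the content of Corollary \ref{C:S1} applied globally, or a direct consequence of strong pseudoconvexity of the domain). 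Since $b\Omega$ is compact, there is a uniform lower bound $\sh_r \geq c_0 > 0$ and $r_{z\bar z}, r_{w\bar w} \geq c_0 > 0$ on a neighborhood $U$ of $b\Omega$.

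Next I would set $\rho = r(1+Kr)$, which is the case $X \equiv 0$, $P \equiv 1$ of the earlier computation. With $P \equiv 1$ all derivatives of $P$ vanish, so $H_P = 0$, $\sh_P = 0$, $Q_P = 0$, $B_P = 0$, and $\re[H_r(L_r, L_P)] = 0$. Plugging into \eqref{Hessian2} the formula collapses dramatically to
\begin{align*}
\sh_\rho = \big(2K\sL_r + \sh_r\big) + r\big(4K^2 \sL_r + 4K\sh_r\big) + r^2\big(4K^2 \sh_r\big).
\end{align*}
On $b\Omega$ this is $2K\sL_r + \sh_r \geq \sh_r \geq c_0 > 0$ since $\sL_r \geq 0$ there and $K > 0$. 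By continuity and compactness of $b\Omega$, there is a neighborhood $U'$ of $b\Omega$ where $\sh_\rho > 0$ for all $K \geq 1$, with the neighborhood possibly depending on $K$; but here the nice structure helps — each term multiplying a power of $r$ is itself controlled, and on $\bar\Omega \setminus U'$ (compact, bounded away from $b\Omega$, so $r \leq -\delta < 0$) one can make $\sh_\rho > 0$ by taking $K$ large, since the $r^2$ coefficient $4K^2\sh_r > 0$ dominates. Similarly, $\rho_{w\bar w} = (1+Kr)r_{w\bar w} + 2K|r_w|^2 + Kr r_{w\bar w}$; on $b\Omega$ this is $r_{w\bar w} + 2K|r_w|^2 > 0$ for $K > 0$, and away from $b\Omega$ largeness of $K$ together with $|r_w|^2$ or a direct estimate handles it — alternatively just invoke \eqref{RHOww}. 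The same for $\rho_{z\bar z}$. Then Sylvester's criterion gives that $\rho$ is plurisubharmonic.

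The main obstacle I anticipate is the uniformity in the normal direction: near $b\Omega$ the term $r(4K^2\sL_r + 4K\sh_r)$ could be negative where $\sL_r$ is large and $r < 0$, and this term scales like $K^2$, competing with the constant term which only scales like $K$. The resolution is that where $\sL_r$ is large (strongly pseudoconvex, which it is everywhere here) the constant term $2K\sL_r + \sh_r$ is also large, and grouping $\sh_\rho = (1+Kr)(2K\sL_r + \sh_r) + 2K^2 r\sL_r + 4K^2 r^2 \sh_r + \cdots$ — or better, completing the square — shows the quadratic-in-$K$ pieces assemble into something of the form $(\text{positive}) \cdot (1 + Kr)^2$-type expression plus controlled lower-order terms. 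Concretely, one checks $\sh_\rho = \sh_r(1+Kr)^2 + 2K\sL_r(1+Kr)$, which since $1 + Kr > 0$ in a neighborhood of $b\Omega$ (for fixed $K$, shrinking the neighborhood) and $\sh_r, \sL_r \geq 0$ there, is manifestly $\geq 0$; for points further inside one shrinks so that $|r|$ is small or argues the $\sh_r(1+Kr)^2$ term, being a perfect square times a positive quantity, stays nonnegative on all of $\bar\Omega$ provided $1 + Kr$ does not change sign — which it does not if $K$ is chosen so that $K|r| < 1$ on the relevant set, or one simply restricts attention to the neighborhood $U$ where $\rho$ being a defining function already forces $1 + Kr > 0$. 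This factored identity is the crux and makes the whole argument transparent.
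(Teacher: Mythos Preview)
Your proposal has a genuine gap: you assume from the outset that ``one can choose the defining function so that $\sh_r(p)>0$'' on $b\Omega$, and you invoke this again at the end (``$\sh_r,\sL_r\geq 0$ there, is manifestly $\geq 0$''). But that is exactly what Kohn's theorem is proving --- strong pseudoconvexity gives only $\sL_r(p)>0$ on $b\Omega$; the sign of $\sh_r(p)$ is \emph{a priori} uncontrolled, and assuming it is circular. The theorem is about a \emph{given} defining function $r$, so you are not free to replace it by a better one via partitions of unity.

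Your factored identity is also off by a constant: since $\rho=r+Kr^2$ gives $\rho_{z\bar z}=(1+2Kr)r_{z\bar z}+2K|r_z|^2$ (and similarly for the other entries), the correct identity is
\[
\sh_\rho=(1+2Kr)^2\,\sh_r+2K(1+2Kr)\,H_r(L_r,L_r),
\]
not with $(1+Kr)$. Once corrected, this identity does yield the result \emph{without} any sign assumption on $\sh_r$: on $b\Omega$ it reads $\sh_\rho=\sh_r+2K\sL_r$, and since $b\Omega$ is compact with $\sL_r\geq c>0$ and $\sh_r$ bounded, choosing $K$ large forces $\sh_\rho>0$ on $b\Omega$; continuity then gives $\sh_\rho>0$ on a neighborhood of $b\Omega$. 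Together with \eqref{RHOww} this finishes the local statement. This is precisely the paper's route: it gives no separate proof of the theorem but observes that with $X=0$ (so $P=1$ and $G_\rho(p)=2K\sL_r(p)+\sh_r(p)$) Proposition~\ref{P:N1}(2) applies at every boundary point once $K$ is large. Note finally that both the paper's discussion and your argument are in $\C^2$, while the theorem is stated for $\C^n$.
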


\subsection{Weakly pseudoconvex points}
The difficulty of producing a plurisubharmonic defining function occurs at points $q\in \Omega$ which lie in the normal direction from weakly pseudoconvex points. From now on let $p\in b\Omega$ be a weakly pseudoconvex point and let $q\in \bar \Omega$ with $\pi(q)=p$. Let $$W=\{p\in b\Omega: \sL_r(p)=0\}$$ be a set of weakly pseudoconvex points of $\Omega$. 

First we collect some basic facts about plurisubharmonic defining functions at weakly pseudoconvex points.
\begin{lemma}\label{L:CS}
Suppose that $r$ is plurisubharmonic on the boundary and $p\in b\Omega$ is weakly pseudoconvex. Then for all vectors $V\in \C^2$
$$H_r(L_r,V)(p)=0.$$ In particular $H_r(L_r,L_P)=0$.
\end{lemma}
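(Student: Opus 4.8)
The statement to prove is that if $r$ is plurisubharmonic on $b\Omega$ and $p \in b\Omega$ is weakly pseudoconvex, i.e. $\sL_r(p) = H_r(L_r, L_r)(p) = 0$, then $H_r(L_r, V)(p) = 0$ for every vector $V \in \C^2$. The plan is to observe that $H_r$ evaluated at $p$ is, by hypothesis, a positive semi-definite Hermitian form on $\C^2$, and that $L_r(p)$ is a vector in its kernel precisely because the associated quadratic form vanishes on it. The standard fact here is that for a positive semi-definite Hermitian form $B$, if $B(V, V) = 0$ then $V$ lies in the radical of $B$, so $B(V, W) = 0$ for all $W$; this is exactly the Cauchy--Schwarz inequality for the (possibly degenerate) inner product induced by $B$.

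Concretely, I would argue as follows. Since $r$ is plurisubharmonic on $b\Omega$, the matrix $H_r(p)$ is positive semi-definite, so for any $t \in \R$ and any $W \in \C^2$ we have $0 \le H_r(L_r(p) + tW, L_r(p) + tW)(p) = H_r(L_r,L_r)(p) + 2t\,\re[H_r(L_r, W)](p) + t^2 H_r(W,W)(p)$. Using $H_r(L_r,L_r)(p) = 0$, this reduces to $0 \le 2t\,\re[H_r(L_r,W)](p) + t^2 H_r(W,W)(p)$ for all real $t$; letting $t \to 0^\pm$ forces $\re[H_r(L_r,W)](p) = 0$. Applying the same argument with $W$ replaced by $iW$ (and noting $\re[H_r(L_r, iW)] = \re[-i\,H_r(L_r,W)] = \im[H_r(L_r,W)]$) gives $\im[H_r(L_r,W)](p) = 0$, hence $H_r(L_r, W)(p) = 0$ for all $W \in \C^2$. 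Taking $V = L_r(p)$ and $V = W$ as needed yields the first assertion, and choosing $W = L_P(p)$ gives the stated special case $H_r(L_r, L_P)(p) = 0$.

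I do not expect any real obstacle here: the content is entirely the elementary linear-algebra fact that a null vector of a positive semi-definite Hermitian form is in its radical, and the only thing to be careful about is handling the real and imaginary parts separately (equivalently, perturbing in both the $W$ and $iW$ directions) since the expansion of $H_r(L_r + tW, L_r + tW)$ naturally produces only $\re[H_r(L_r,W)]$. One could alternatively invoke Cauchy--Schwarz directly: $|H_r(L_r, W)(p)|^2 \le H_r(L_r,L_r)(p)\, H_r(W,W)(p) = 0$, which is perhaps the cleanest phrasing, though it requires first noting that Cauchy--Schwarz holds for positive semi-definite (not just positive definite) Hermitian forms. Either route is short; I would likely present the Cauchy--Schwarz version for brevity and then simply remark that $V = L_P$ is a legitimate choice to obtain the final sentence of the lemma.
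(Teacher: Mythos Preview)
Your proposal is correct and essentially identical to the paper's proof: the paper invokes Cauchy--Schwarz for the positive semi-definite form $H_r(\cdot,\cdot)(p)$ to get $|H_r(L_r,V)(p)|\le (H_r(L_r,L_r)(p))^{1/2}(H_r(V,V)(p))^{1/2}=0$, which is exactly the ``cleanest phrasing'' you settle on. Your alternative expansion argument with $t\to 0^\pm$ is just the standard proof of that Cauchy--Schwarz inequality, so there is no meaningful difference in approach.
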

\begin{proof}
Since $H_r$ is positive semi-definite Cauchy-Schwarz applies 
$$|H_r(L_r,V)(p)|\leq(H_r(L_r,L_r)(p))^{\frac12}(H_r(V,V)(p))^{\frac12}.$$
The conclusion follows since $H_r(L_r,L_r)(p)=\sL_r(p)=0$ for weakly pseudoconvex points.
\end{proof}

\begin{lemma}\label{L:SH}
Suppose that $r$ is plurisubharmonic on the boundary and $p\in b\Omega$. Then:
\begin{enumerate} 
\item $\sh_r(p)\geq0, \text{ and }$
\item if $p$ is a weakly pseudoconvex point then $\sh_r(p)=0.$
\end{enumerate}
\end{lemma}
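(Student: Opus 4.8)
The plan is to prove Lemma~\ref{L:SH} by directly examining the complex Hessian matrix $H_r(p)$ and using the hypothesis of plurisubharmonicity on the boundary together with the reduced form \eqref{deffn} of $r$ at $p$. After translating and rotating so that $p=(0,0)$ and $r=\im w+F$, recall that $r_w(0,0)=\frac1{2i}\neq0$. Since $r$ is plurisubharmonic at every boundary point, $H_r(p)$ is a $2\times2$ positive semi-definite Hermitian matrix, so by Sylvester's criterion both diagonal entries are non-negative and $\sh_r(p)=\det H_r(p)\geq0$; this gives part (1) immediately.

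For part (2), the key observation is that the tangential vector $L_r=r_w\frac{\partial}{\partial z}-r_z\frac{\partial}{\partial w}$ is nonzero at $p$ precisely because $r_w(p)\neq0$ (using \eqref{deffn}), so $L_r(p)$ is an honest nonzero vector in $\C^2$. By definition $H_r(L_r,L_r)(p)=\sL_r(p)$, and the weak pseudoconvexity hypothesis says this equals $0$. Thus $H_r(p)$ is a positive semi-definite Hermitian matrix possessing a nonzero null vector $L_r(p)$ — that is, $H_r(p)$ is singular — which forces $\det H_r(p)=\sh_r(p)=0$. Alternatively one can invoke Lemma~\ref{L:CS}: taking $V$ to range over a basis of $\C^2$ shows $H_r(L_r,\cdot)(p)\equiv0$, so $L_r(p)$ lies in the kernel of $H_r(p)$, again yielding a zero determinant.

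I expect no real obstacle here; the only point requiring a word of care is verifying that $L_r(p)\neq0$, which is exactly where the normalization \eqref{deffn} (equivalently $\nabla r(p)\neq0$, guaranteeing $r_z(p)$ and $r_w(p)$ do not both vanish) is used — without it, $\sL_r(p)=0$ would be vacuous and would not force the determinant to vanish. Once that is in hand, the argument is a two-line linear algebra fact about positive semi-definite Hermitian forms: a nonzero isotropic vector of a positive semi-definite form must be a null vector, hence the form is degenerate.

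\begin{proof}
After translating and rotating, assume $r$ has the form \eqref{deffn} at $p=(0,0)$, so that $r_w(0,0)=\frac1{2i}\neq0$; in particular $\nabla r(p)\neq(0,0)$ and the tangential vector $L_r(p)=\langle r_w(p),-r_z(p)\rangle$ is nonzero.

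\emph{(1)} Since $r$ is plurisubharmonic on the boundary, $H_r(p)$ is a positive semi-definite Hermitian matrix for $p\in b\Omega$. By Sylvester's criterion $\sh_r(p)=\det H_r(p)\geq0$.

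\emph{(2)} Now suppose $p$ is weakly pseudoconvex, so $H_r(L_r,L_r)(p)=\sL_r(p)=0$. Since $H_r(p)$ is positive semi-definite, Cauchy--Schwarz (as in Lemma~\ref{L:CS}) gives $H_r(L_r,V)(p)=0$ for every $V\in\C^2$; equivalently, $L_r(p)$ lies in the kernel of $H_r(p)$. As $L_r(p)\neq0$, the matrix $H_r(p)$ is singular, hence $\sh_r(p)=\det H_r(p)=0$.
\end{proof}
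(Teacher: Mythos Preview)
Your proof is correct and follows essentially the same approach as the paper: part~(1) is Sylvester's criterion, and part~(2) observes that $L_r(p)$ is an isotropic (hence null) vector of the positive semi-definite form $H_r(p)$, forcing the determinant to vanish. Your version is in fact slightly more careful than the paper's, since you explicitly verify $L_r(p)\neq0$ via \eqref{deffn}, a point the paper leaves implicit.
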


\begin{proof}
(1) is immediate from Sylvester's criterion. If $p$ is weakly pseudoconvex $0$ is an eigenvalue of $H_r$ as $H_r(L_r,L_r)(p)=0$. Therefore, $H_r$ cannot be positive definite and by Sylvester's criterion $\sh_r(p)\not>0$.
\end{proof}

Starting with equation \eqref{HessianFinal} and using Lemma \ref{L:CS} and Lemma \ref{L:SH}
\begin{align}\label{HessianWeak}
\sh_\rho(q)= B_P(p) + r\bigg(&4KPu\re[N_rH_r(L_r,L_r)](p)+2P^2u\re[N_r\sh_r](p)\notag\\
&+4Pu\re[N_r\re[H_r(L_r,L_P)]](p)  +2u\re[N_rB_P](p)+PQ_P(p) \notag\\
&+2\re[H_P(L_r,L_P)](p)+2K H_P(L_r,L_r)(p)\bigg)+\mathcal O(r^2)
\end{align} 
for all $p\in W$.

Examining each power of $r$ in \eqref{HessianWeak} a necessary condition as well as a sufficient condition is obtained.
Considering the constant terms in \eqref{HessianWeak} gives a necessary condition:

\begin{lemma}\label{P:NWeak}
Suppose that $\rho$ is plurisubharmonic and $p\in W$. Then
$$B_P(p)= (4\re[r_zP_{\bar z}]\re[r_wP_{\bar w}]-|r_zP_{\bar w}+r_{\bar w}P_z|^2)\big|_p=0.$$
\end{lemma}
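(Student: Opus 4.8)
The plan is to extract the claim from the constant (in $r$) part of \eqref{HessianWeak}, exactly as the sentence preceding the lemma advertises. Since $\rho$ is plurisubharmonic, in particular $\sh_\rho(q)\geq 0$ for every $q\in\bar\Omega$ near $p$; restricting to points $q$ on the inward real normal from $p$ (so that $\pi(q)=p$ and $r(q)\to 0^-$ as $q\to p$), the expansion \eqref{HessianWeak} is valid. The constant term there is $B_P(p)$, because Lemma \ref{L:CS} kills $\re[H_r(L_r,L_P)](p)$ and Lemma \ref{L:SH}(2) kills $\sh_r(p)$, while $\sL_r(p)=0$ by definition of $W$; all the remaining terms carry a factor of $r(q)$ or $\mathcal O(r^2)$.

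Next I would argue that $B_P(p)$ cannot be negative. If $B_P(p)<0$, then by continuity $B_P(\pi(q))$ stays negative and, more importantly, writing \eqref{HessianWeak} as $\sh_\rho(q)=B_P(p)+r(q)\cdot(\text{bounded})+\mathcal O(r^2)$, for $q$ close enough to $p$ the term $r(q)\cdot(\text{bounded})+\mathcal O(r^2)$ is smaller in absolute value than $|B_P(p)|/2$, forcing $\sh_\rho(q)<0$. This contradicts plurisubharmonicity of $\rho$ on $\Omega$. Hence $B_P(p)\geq 0$.

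It remains to rule out $B_P(p)>0$, and here is where the actual content lies: I claim $B_P(p)\leq 0$ always, by the Cauchy–Schwarz / AM–GM type inequality hidden in the algebraic shape of $B_P$. Writing $a=r_zP_{\bar z}$ and $b=r_wP_{\bar w}$ (complex numbers), one has $4\re[a]\re[b]-|a+\bar b|^2$; expanding $|a+\bar b|^2=|a|^2+|b|^2+2\re[a b]$ and using $\re[a]\re[b]=\tfrac14\big(|a+\bar b|^2-|a-\bar b|^2+ \ldots\big)$ — more cleanly, $4\re[a]\re[b]=(a+\bar a)(b+\bar b)=ab+a\bar b+\bar a b+\bar a\bar b$, so $4\re[a]\re[b]-|a+\bar b|^2 = ab+\bar a\bar b - |a|^2-|b|^2 = -(|a|^2+|b|^2-2\re[ab]) = -|a-\bar b|^2\leq 0$. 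Thus $B_P(p)=-|r_zP_{\bar z}-r_{\bar w}P_w|^2\big|_p\leq 0$ unconditionally. Combined with $B_P(p)\geq 0$ from the previous paragraph, this yields $B_P(p)=0$.

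The main obstacle is recognizing the correct algebraic identity: one must see that $B_P$ is (minus) a perfect square, $B_P=-|r_zP_{\bar z}-r_{\bar w}P_w|^2$, rather than something with an indefinite sign; once this is in hand, the rest is the routine ``constant term of a Taylor expansion is controlled by the sign of $\sh_\rho$'' argument already used in the proof of Proposition \ref{P:N1}. One should also take a moment to justify that points $q$ with $\pi(q)=p$ and $r(q)<0$ lie in $\Omega$ so that $\sh_\rho(q)\geq 0$ genuinely applies there, and that $u(q)$ stays bounded near $p$, both of which follow from the setup in Section \ref{S:Prel}.
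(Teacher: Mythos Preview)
Your overall strategy matches the paper's: evaluate the constant term of \eqref{HessianWeak} to get $\sh_\rho(p)=B_P(p)$, use plurisubharmonicity for $B_P(p)\geq 0$, and use an algebraic inequality for $B_P(p)\leq 0$. The paper does the last step by invoking Lemma~\ref{BPbound}; you instead attempt a direct perfect--square identity. That is a legitimate and in fact cleaner idea, but the identity you wrote is \emph{wrong}.

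Your substitution $r_zP_{\bar w}+r_{\bar w}P_z=a+\bar b$ with $a=r_zP_{\bar z}$, $b=r_wP_{\bar w}$ is incorrect: $a+\bar b=r_zP_{\bar z}+r_{\bar w}P_w$, which is not the expression in $B_P$. Consequently your claimed formula $B_P=-|r_zP_{\bar z}-r_{\bar w}P_w|^2$ fails. For instance, at a point where $r_z=1$, $r_w=i$, $P_z=i$, $P_w=1$ one computes $B_P=0-|1\cdot 1+(-i)\cdot i|^2=-4$, while $-|r_zP_{\bar z}-r_{\bar w}P_w|^2=-|(-i)-(-i)|^2=0$. The correct perfect--square identity is
\[
B_P \;=\; -\,\bigl|\,r_wP_z-r_zP_w\,\bigr|^2 \;=\; -\,|L_r(P)|^2,
\]
which you can verify by the same expansion you attempted (and which is exactly the content of Lemma~\ref{BPbound}, proved there via AM--GM and Cauchy--Schwarz). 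With this corrected identity your argument goes through unchanged and in fact yields the sharper statement $B_P(p)=0\iff L_r(P)(p)=0$.

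A minor remark: your detour through interior points $q$ with $r(q)<0$ to obtain $B_P(p)\geq 0$ is unnecessary. Since $\rho$ is plurisubharmonic at $p\in b\Omega$ itself, simply evaluating \eqref{HessianWeak} at $q=p$ (where $r(p)=0$) already gives $0\leq\sh_\rho(p)=B_P(p)$; the paper even notes $\sh_\rho(p)=0$ directly, since $p$ is weakly pseudoconvex for $\rho$ as well and Lemma~\ref{L:SH}(2) applies to $\rho$.
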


\begin{proof}
Since $p\in b\Omega$ is a weakly pseudoconvex point, $$0=\sh_{\rho}(p)=B_P(p).$$
In fact, Lemma \ref{BPbound} shows that $B_P\leq0$. In order to preserve plurisubharmonicity on the boundary $B_P(p)=0$.
\end{proof}

Proposition \ref{P:NWeak} shows the differential equation $B_P=0$ and the set $Z(B_P)=\{p\in b\Omega: B_P(p)=0\}$ are of critical importance.  In particular, only real-valued functions $P$ with $W\subset Z(B_P)$ need be considered. Some elementary lemmas about $B_P$ and $Z(B_P)$ are now derived, aiming towards building a library of functions $P$ satisfying $W\subset Z(B_P)$. It is clear that if $P=1+X$, $B_P=B_X$ and so $P$ and $X$ are used interchangeably.

\begin{lemma}\label{BPbound} 
Let $p\in b\Omega$ be any (not necessarily weakly pseudoconvex point) point in the boundary of $\Omega$. Then
$$B_P(p)\leq 0 \text{ and }$$
$$B_P(p)=0 \text{ if and only if } L_r(P)(p)=0.$$
\end{lemma}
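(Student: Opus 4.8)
The plan is to treat $B_P(p)$ directly as a quadratic expression in the complex gradient of $P$ at $p$, exploiting the specific algebraic form $B_P = 4\,\re[r_zP_{\bar z}]\,\re[r_wP_{\bar w}] - |r_zP_{\bar w}+r_{\bar w}P_z|^2$. Since $P$ is real-valued, $P_{\bar z} = \overline{P_z}$ and $P_{\bar w} = \overline{P_w}$, so $2\re[r_zP_{\bar z}] = r_zP_{\bar z} + r_{\bar z}P_z$ and likewise for the $w$-term. The first step is therefore to rewrite $4\re[r_zP_{\bar z}]\re[r_wP_{\bar w}]$ as $(r_zP_{\bar z}+r_{\bar z}P_z)(r_wP_{\bar w}+r_{\bar w}P_w)$ and expand both products fully. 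The key identity I expect to fall out is that $B_P$ equals $-|r_zP_{\bar w} - r_wP_{\bar z}|^2$ (or a real scalar multiple thereof), i.e. the cross terms recombine so that the ``$+$'' combination inside the first product cancels against the modulus squared in the second, leaving only the ``$-$'' (skew) combination. This is a standard Lagrange-type identity: $|a|^2|b|^2 - |\langle a,b\rangle|^2$-style manipulation applied to the two $\C^2$ covectors $(r_z, r_w)$ and $(P_{\bar z}, P_{\bar w}) = (\overline{P_z}, \overline{P_w})$.

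Concretely, expanding gives $B_P = r_zr_w P_{\bar z}P_{\bar w} + r_zr_{\bar w}P_{\bar z}P_w + r_{\bar z}r_w P_zP_{\bar w} + r_{\bar z}r_{\bar w}P_zP_w - (r_zP_{\bar w}+r_{\bar w}P_z)(\overline{r_zP_{\bar w}+r_{\bar w}P_z})$. The modulus square expands to $|r_z|^2|P_w|^2 + |r_w|^2|P_z|^2 + r_zr_{\bar w}\overline{P_w}P_z + r_{\bar z}r_wP_w\overline{P_z}$ — wait, one must be careful that $\overline{P_{\bar w}} = P_w$ since $P$ is real, so $|r_zP_{\bar w}+r_{\bar w}P_z|^2 = |r_z|^2|P_{\bar w}|^2 + |r_w|^2|P_z|^2 + 2\re[r_z\overline{r_{\bar w}}P_{\bar w}\overline{P_z}] = |r_z|^2|P_w|^2 + |r_w|^2|P_z|^2 + 2\re[r_zr_wP_{\bar w}P_{\bar z}]$. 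After I substitute this in, the terms $r_zr_wP_{\bar z}P_{\bar w}$ and $r_{\bar z}r_{\bar w}P_zP_w = \overline{r_zr_wP_{\bar z}P_{\bar w}}$ combine to $2\re[r_zr_wP_{\bar z}P_{\bar w}]$, which cancels exactly against the $-2\re[\cdots]$ from the modulus square. What remains is $r_zr_{\bar w}P_{\bar z}P_w + r_{\bar z}r_wP_zP_{\bar w} - |r_z|^2|P_w|^2 - |r_w|^2|P_z|^2 = -(|r_z|^2|P_w|^2 + |r_w|^2|P_z|^2 - r_zr_{\bar w}P_{\bar z}P_w - r_{\bar z}r_wP_zP_{\bar w}) = -|r_{\bar z}P_w - r_{\bar w}P_z|^2$ — recognizing this as $-|\,\overline{r_z}P_w - \overline{r_w}P_z\,|^2$, which is $-|L_r(P)|^2$ up to the conjugate, since $L_r(P) = r_w P_z - r_z P_w$ and hence $|L_r(P)|^2 = |r_wP_z - r_zP_w|^2 = |\overline{r_w}\,\overline{P_z} - \overline{r_z}\,\overline{P_w}|^2 = |r_{\bar w}P_{\bar z} - r_{\bar z}P_{\bar w}|^2$; matching conjugation conventions against what I derived confirms $B_P(p) = -|L_r(P)(p)|^2$.

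Once the identity $B_P(p) = -|L_r(P)(p)|^2$ is established, both conclusions are immediate: $B_P(p) = -|L_r(P)(p)|^2 \leq 0$ for every $p \in b\Omega$, with equality if and only if $L_r(P)(p) = 0$. I would state the algebraic identity as the crux of the lemma — perhaps as a displayed equation — and then note that the two claimed statements are its trivial consequences. The only genuine obstacle is bookkeeping: keeping the conjugation conventions straight (which factors are barred, that $P$ real forces $P_{\bar z} = \overline{P_z}$) through the expansion, and matching the final skew-symmetric modulus square against the paper's definition $L_f = f_w\partial_z - f_z\partial_w$ so that the sign and the conjugate placement come out consistently. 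There is no analytic difficulty here at all — it is a pure two-variable algebraic identity — so the write-up should be short, essentially just the expansion above presented cleanly, followed by the one-line deduction of both assertions.
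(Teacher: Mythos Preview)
Your approach is correct and is genuinely cleaner than the paper's. You prove the exact identity $B_P = -|L_r(P)|^2$ by direct expansion: writing $4\re[r_zP_{\bar z}]\re[r_wP_{\bar w}]=(r_zP_{\bar z}+r_{\bar z}P_z)(r_wP_{\bar w}+r_{\bar w}P_w)$, expanding $|r_zP_{\bar w}+r_{\bar w}P_z|^2$, cancelling the $r_zr_wP_{\bar z}P_{\bar w}$ and $r_{\bar z}r_{\bar w}P_zP_w$ terms, and recognizing the residue $-(|r_z|^2|P_w|^2+|r_w|^2|P_z|^2 - r_zr_{\bar w}P_{\bar z}P_w - r_{\bar z}r_wP_zP_{\bar w})$ as $-|r_wP_z-r_zP_w|^2=-|L_r(P)|^2$. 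Both conclusions then fall out immediately.

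The paper instead stops the expansion at $B_P=-|r_z|^2|P_w|^2-|r_w|^2|P_z|^2+2\re[r_zP_{\bar z}]\re[r_wP_{\bar w}]+2\im[r_zP_{\bar z}]\im[r_wP_{\bar w}]$ and bounds this from above by zero using AM--GM on the first two terms and Cauchy--Schwarz on the last two. This gives $B_P\le 0$ but not the identity, so the equality case requires tracking when both AM--GM and Cauchy--Schwarz are tight, leading to an auxiliary parameter $\lambda$, a case split on whether any of $r_z,r_w,P_z,P_w$ vanish, and a final algebraic manipulation to extract $L_r(P)=0$. Your identity subsumes all of this: it yields the inequality, the exact defect, and the equality characterization in one line, and as a bonus makes Lemma~\ref{BPscale} ($B_{\alpha P}=\alpha^2 B_P$) obvious. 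The paper's route works but is noticeably less efficient; your Lagrange-identity observation is the right way to see this lemma.
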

\begin{proof}
Expanding the definition of $B_P$ gives us
\begin{align}\label{E:BPdef}
B_P=& 4\re[r_zP_{\bar z}]\re[r_wP_{\bar w}]-|r_zP_{\bar w}+r_{\bar w}P_z|^2\notag\\
=& 4\re[r_zP_{\bar z}]\re[r_wP_{\bar w}]-|r_z|^2|P_w|^2-|r_w|^2|P_z|^2-2\re[r_zP_{\bar w}r_wP_{\bar z}]\notag\\
=& 4\re[r_zP_{\bar z}]\re[r_wP_{\bar w}]-|r_z|^2|P_w|^2-|r_w|^2|P_z|^2\notag\\
&-2\re[r_zP_{\bar z}]\re[r_wP_{\bar w}] + 2\im[r_zP_{\bar z}]\im[r_wP_{\bar w}]\notag\\
=& -|r_z|^2|P_w|^2-|r_w|^2|P_z|^2 +2\re[r_zP_{\bar z}]\re[r_wP_{\bar w}]+2\im[r_zP_{\bar z}]\im[r_wP_{\bar w}] 
\end{align}
Using the Arithmetic-Geometric mean inequality
\begin{equation}\label{E:BPAMGM} 
|r_z|^2|P_w|^2+|r_w|^2|P_z|^2\geq2\sqrt{|r_z|^2|P_w|^2|r_w|^2|P_z|^2}=2|r_z||P_w||r_w||P_z| .
\end{equation}
By Cauchy-Schwarz
\begin{align}\label{E:BPCS}
&(\re[r_zP_{\bar z}]\re[r_wP_{\bar w}]+\im[r_zP_{\bar z}]\im[r_wP_{\bar w}])\notag\\
\leq& ((\re[r_zP_{\bar z}])^2+(\im[r_zP_{\bar z}])^2)^\frac12
((\re[r_wP_{\bar w}])^2+(\im[r_wP_{\bar w}])^2)^\frac12\notag\\
=&(|r_zP_z|^2)^\frac12(|r_wP_w|^2)^\frac12 =|r_z||P_z||r_w||P_w|.
\end{align}
Substituting inequalities \eqref{E:BPAMGM} and \eqref{E:BPCS} into equation \eqref{E:BPdef} proves the result.
Furthermore, the equality holds if and only if 
\begin{align}\label{E:equality1}
|r_z||P_w|=|r_w||P_z|
\end{align} and 
\begin{align}\label{E:equality}
\langle\re[r_zP_{\bar z}] ,\im[r_zP_{\bar z}]\rangle=\lambda \langle\re[r_wP_{\bar w}] ,\im[r_wP_{\bar w}]\rangle
\end{align} for some $\lambda\in \R$. However, notice that if $\lambda<0$ both terms in the definition of  $B_P$ are non-positive and must both equal $0$ for the equality to hold. Thus, we may assume $\lambda\geq0$.

The equality \eqref{E:equality} can be rephrased as following:
\begin{align}\label{E:equality2}
r_zP_{\bar z}=\re[r_zP_{\bar z}]+i\im[r_zP_{\bar z}]=\lambda\re[r_wP_{\bar w}]+i\lambda\im[r_wP_{\bar w}]=\lambda r_wP_{\bar w}.
\end{align}

If any of $r_z,r_w,P_z,P_w$ equal $0$ at $p$, the equation \eqref{E:equality1} says that $$L_r(P)(p)=r_w(p)P_z(p)-r_z(p)P_w(p)=0.$$

Now suppose none of the terms vanish at $p$.
Then taking the modulus of each side of \eqref{E:equality2} gives $|r_z||P_z|=\lambda|r_w||P_w|$. Solving for $|P_w|=\frac{|r_z||P_z|}{\lambda|r_w|}$, substituting it into equation \eqref{E:equality1}, and solving for $\lambda$
\begin{align*}
\lambda =&\frac{|r_z|^2}{|r_w|^2}.
\end{align*}

Finally, substituting $\lambda$ back into \eqref{E:equality2}
\begin{align*}
0=&r_zP_{\bar z}-\lambda r_wP_{\bar w}=r_zP_{\bar z}-\frac{|r_z|^2}{|r_w|^2}r_wP_{\bar w}\\
=& \frac{r_z}{r_{\bar w}}\left( r_{\bar w}P_{\bar z} - r_{\bar z}P_{\bar w}\right) = \frac{r_z}{r_{\bar w}} \overline{L_r(P)} .
\end{align*}
The above string of equalities show that \eqref{E:equality1} and \eqref{E:equality2} is equivalent to $L_r(P)(p)=0$.
\end{proof}

\begin{lemma}\label{BPscale}
For all $\alpha\in \R\setminus \{0\}$, $$Z(B_{\alpha P})=Z(B_P).$$
\end{lemma}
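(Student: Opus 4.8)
The plan is to reduce the statement to the characterization of $Z(B_P)$ already obtained in Lemma \ref{BPbound}, namely that $p\in Z(B_P)$ if and only if $L_r(P)(p)=0$. Since $L_r$ is a first-order linear differential operator, it is $\R$-linear (indeed $\C$-linear) in the function it acts on, and scaling $P$ by a nonzero real constant $\alpha$ simply multiplies $L_r(P)$ by $\alpha$. Thus the key observation is the operator identity $L_r(\alpha P)=\alpha\,L_r(P)$, which holds pointwise for every $\alpha\in\R\setminus\{0\}$.

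First I would invoke Lemma \ref{BPbound} twice: once for $P$ and once for $\alpha P$. This gives
\[
Z(B_P)=\{p\in b\Omega: L_r(P)(p)=0\}\quad\text{and}\quad Z(B_{\alpha P})=\{p\in b\Omega: L_r(\alpha P)(p)=0\}.
\]
Next I would note that $L_r(\alpha P)=(\alpha P)_w\,\tfrac{\partial}{\partial z}-(\alpha P)_z\,\tfrac{\partial}{\partial w}$ acting on coordinates, but more directly, from the definition $L_r(f)=r_w f_z - r_z f_w$, one has $L_r(\alpha P)=r_w(\alpha P)_z - r_z(\alpha P)_w=\alpha(r_w P_z - r_z P_w)=\alpha L_r(P)$. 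Since $\alpha\neq 0$, the equation $L_r(\alpha P)(p)=0$ holds exactly when $L_r(P)(p)=0$, so the two zero sets coincide. This completes the argument.

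There is essentially no obstacle here — the only mild subtlety is that Lemma \ref{BPbound} is stated for a function $P$ arising as $P=1+X$ with $X(0,0)=0$, whereas $\alpha P$ need not satisfy $(\alpha P)(0,0)=1$ or vanish at the origin after subtracting $1$; however, an inspection of the proof of Lemma \ref{BPbound} shows that the normalization $P(0,0)=1$ and $P>0$ near the origin is never used in establishing the equivalence $B_P(p)=0\iff L_r(P)(p)=0$. If one prefers to avoid even this cosmetic issue, the identity $B_{\alpha P}=\alpha^2 B_P$ follows immediately from the defining formula $B_P=4\re[r_zP_{\bar z}]\re[r_wP_{\bar w}]-|r_zP_{\bar w}+r_{\bar w}P_z|^2$ (every term is quadratic in $P$), and then $B_{\alpha P}(p)=0\iff B_P(p)=0$ since $\alpha^2>0$, giving $Z(B_{\alpha P})=Z(B_P)$ directly without reference to $L_r(P)$ at all. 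I would present this second route as the cleaner proof.
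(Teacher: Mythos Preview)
Your proposal is correct, and the ``second route'' you ultimately advocate---showing directly from the defining formula that $B_{\alpha P}=\alpha^2 B_P$ and hence the zero sets agree---is exactly the paper's proof. Your first route via Lemma~\ref{BPbound} and the linearity $L_r(\alpha P)=\alpha L_r(P)$ is also valid and only marginally less direct; your remark that the normalization $P(0,0)=1$ plays no role in the equivalence $B_P(p)=0\iff L_r(P)(p)=0$ is accurate.
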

\begin{proof}
The result follows from homogeneity of $B_P$,
\begin{align*}
B_{\alpha P}=&4\re[r_z(\alpha P_{\bar z})]\re[r_w(\alpha P_{\bar w})]-|r_z(\alpha P_{\bar w})+r_{\bar w}(\alpha P_z)|^2\\
=&4\alpha^2\big(\re[r_zP_{\bar z}]\re[r_wP_{\bar w}]-|r_zP_{\bar w}+r_{\bar w}P_z|^2\big)\\
=&4\alpha^2B_P.
\end{align*}
\end{proof}

\section{Necessary and Sufficient Conditions}\label{NSC} 

\subsection{Necessary Conditions}

In order to guarantee that $\rho$ remains plurisubharmonic near $p_0$, we use the following theorem from \cite{Mernik20}:
\begin{theorem}\label{BoundaryPSH}
A domain $\Omega$ admits a local defining function which is plurisubharmonic on the boundary if and only if there exists a real valued function $T$ vanishing at $p_0$ and constant $C>0$ such that $-\sh_{(1+X)r}\leq C\sL_r$ in a neighborhood of $p_0$. 

Furthermore $\rho=(1+Kr+X)$ is a local defining function plurisubharmonic on the boundary near $p_0$ for some $K$.
\end{theorem}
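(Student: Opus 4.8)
The plan is to show the equivalence by unwinding the formula for $\sh_\rho$ on the boundary together with Proposition \ref{Boundary}, then handle the neighborhood statement separately. For the forward direction, suppose $\Omega$ admits a local defining function plurisubharmonic on $b\Omega$ near $p_0$; by replacing $r$ with such a function we may assume $r$ itself is plurisubharmonic on $b\Omega$ near $p_0$. Then $\sL_r \geq 0$ and, by Lemma \ref{L:SH}, $\sh_r(p) = 0$ at every weakly pseudoconvex $p$ near $p_0$; Proposition \ref{Boundary} gives $\sh_{(1+Kr+X)r}(p) = 2Kh\sL_r(p) + \sh_{(1+X)r}(p)$ on $b\Omega$, so choosing $K$ as in \eqref{RHOww} and estimating, one sees $\sh_{(1+X)r}(p) = -2Kh(p)\sL_r(p) + \sh_\rho(p) \geq -2Kh(p)\sL_r(p)$ once $\sh_\rho \geq 0$ on the boundary; setting $C = 2\sup h \cdot |K|$ (with $X = T$) yields $-\sh_{(1+X)r} \leq C\sL_r$ near $p_0$. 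The point here is that any defining function plurisubharmonic on the boundary can be written in the normalized form $r(1+Kr+X)$ after the coordinate reduction \eqref{deffn}, and the defining-function-plurisubharmonic-on-the-boundary condition is exactly $\sh_\rho \geq 0$ on $b\Omega$ together with $\rho_{w\bar w} \geq 0$, the latter being automatic for large $K$ by \eqref{RHOww}.

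For the converse, suppose there is a real-valued $T$ vanishing at $p_0$ and $C > 0$ with $-\sh_{(1+X)r} \leq C\sL_r$ near $p_0$; here I must be careful about which function plays the role of $X$ versus $T$, so I would first align notation so that $X = T$ and set $\rho = r(1+Kr+T)$. Again using Proposition \ref{Boundary}, $\sh_\rho(p) = 2Kh(p)\sL_r(p) + \sh_{(1+T)r}(p) \geq 2Kh(p)\sL_r(p) - C\sL_r(p) = (2Kh(p) - C)\sL_r(p)$ on $b\Omega$, which is $\geq 0$ once $K$ is large enough that $2K\inf h > C$ near $p_0$. Combined with $\rho_{w\bar w} > 0$ near $p_0$ from \eqref{RHOww}, Sylvester's criterion gives that $\rho$ is plurisubharmonic on $b\Omega$ near $p_0$, i.e.\ $\Omega$ admits a local defining function plurisubharmonic on the boundary near $p_0$, and moreover $\rho = r(1+Kr+T)$ is such a function — which is the "furthermore" clause.

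The main obstacle I anticipate is bookkeeping rather than a deep idea: one has to pin down exactly what "local defining function plurisubharmonic on the boundary" means in terms of the decomposition $\rho = r(1+Kr+X)$, make sure the passage from "$\Omega$ admits such a function" to "$r$ itself may be taken plurisubharmonic on $b\Omega$" is legitimate (this uses that plurisubharmonicity on the boundary is a property of the domain, not of a particular defining function, so one is free to start with the optimal $r$), and track the constant $C$ and the sign conventions through Proposition \ref{Boundary} without losing a factor. A secondary subtlety is that Proposition \ref{Boundary} is stated for the specific form $\rho = r(1+Kr+T)$, so I would phrase everything with $T$ in that slot throughout and only invoke \eqref{HessianFinal} if a neighborhood (rather than boundary) statement were needed — but since the theorem only asserts plurisubharmonicity on the boundary, the boundary formula of Proposition \ref{Boundary} suffices and no Taylor expansion into $\Omega$ is required.
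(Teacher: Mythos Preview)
The paper does not prove Theorem~\ref{BoundaryPSH}; it is quoted verbatim from the author's earlier work \cite{Mernik20} and used as a black box in Section~\ref{NSC}. There is therefore no proof in this paper to compare against.

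That said, your overall strategy---reduce everything to the boundary formula of Proposition~\ref{Boundary}---is the natural one and almost certainly what \cite{Mernik20} does. Your converse direction is clean and correct: from $-\sh_{(1+T)r}\le C\sL_r$ on $b\Omega$ you get
\[
\sh_{r(1+Kr+T)}=2Kh\sL_r+\sh_{(1+T)r}\ge(2Kh-C)\sL_r\ge 0
\]
for $K$ large, and \eqref{RHOww} together with Sylvester's criterion finishes it. This also delivers the ``furthermore'' clause.

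Your forward direction, however, is muddled at the start. The sentence ``by replacing $r$ with such a function we may assume $r$ itself is plurisubharmonic on $b\Omega$'' is either illegitimate or self-defeating. The inequality $-\sh_{(1+T)r}\le C\sL_r$ is stated for the \emph{given} defining function $r$; you are not free to swap it out, since both sides change. And if you \emph{could} take $r$ itself plurisubharmonic on $b\Omega$, the conclusion would be immediate with $T=0$: then $\sh_r\ge 0$ on $b\Omega$, so $-\sh_{(1+0)r}=-\sh_r\le 0\le C\sL_r$ for any $C>0$, and none of your subsequent manipulation is needed. What you actually want---and what your later parenthetical about writing any defining function in the form $r(1+Kr+X)$ hints at---is to keep $r$ fixed, take the hypothesized plurisubharmonic-on-boundary function $\rho$, write $\rho=r(1+Kr+T)$ with $T(p_0)=0$, and then read off directly from Proposition~\ref{Boundary} that
\[
-\sh_{(1+T)r}=2Kh\sL_r-\sh_\rho\le 2Kh\sL_r\le C\sL_r
\]
on $b\Omega$ (the case $K<0$ giving $-\sh_{(1+T)r}\le 0$ trivially). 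Drop the ``replace $r$'' sentence and route the argument this way; then the forward direction is fine.
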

Thus the first necessary condition is $\sh_{(1+X)r}\leq C\sL_r$ for some $C>0$.
Furthermore, Lemma \ref{BPbound} gives the second necessary condition $$W\subset Z(L_r(P))=Z(L_r(1+X)) .$$

\subsection{Sufficient Conditions}	

 By considering terms in the coefficient of $r$ in \eqref{HessianWeak}, the following sufficient condition for making $\rho$ plurisubharmonic is  obtained:
\begin{proposition}\label{Suff}
Let $W$ be the set of weakly pseudoconvex points of $\Omega$. Let $(0,0)=p_0\in W$ be the origin and let $U$ be a neighborhood of $p_0$. Suppose that there exists a real-valued function $P=1+X$ such that
\begin{enumerate} 
\item $W\cap U\subset Z(B_P)$, i.e., for all $p\in W\cap U$ $L_r(P)(p)=0$,
\item $H_P(L_r,L_r)(p_0)>0$, and
\end{enumerate}
Then there exist  $L_0$ and $K_0(L_0)$ such that such that for all $L< L_0$ and $K\geq K_0$ $\rho=r(1+Kr+LX)$ is plurisubharmonic at points lying on a normal line from weakly pseudoconvex points in some neighborhood of $p_0$.
\end{proposition}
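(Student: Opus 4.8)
The plan is to start from the expansion \eqref{HessianWeak} of $\sh_\rho$ along the normal line from weakly pseudoconvex points, substitute $X \mapsto LX$ (equivalently $P = 1 + LX$), and track the dependence on the two parameters $L$ and $K$ separately. Since the error term in \eqref{HessianWeak} is $\mathcal O(r^2)$ and we only work at points $q$ with $\pi(q) = p \in W$, it suffices to show the constant-in-$r$ term is non-negative and the coefficient of $r$ can be made strictly positive (after which Sylvester's criterion, together with \eqref{RHOww} for $\rho_{w\bar w}$, finishes the job, choosing $K$ large at the end).

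First I would handle the constant term. By hypothesis (1), $L_r(1 + LX)(p) = L \cdot L_r(X)(p) = 0$ for every $p \in W \cap U$, so by Lemma \ref{BPbound} we get $B_P(p) = 0$ there; and since $p$ is weakly pseudoconvex, Lemma \ref{L:CS} gives $H_r(L_r, L_P)(p) = 0$ and Lemma \ref{L:SH} gives $\sh_r(p) = 0$ and $\sL_r(p) = 0$. Hence the entire constant-in-$r$ bracket in \eqref{HessianFinal} vanishes at $p$, so $\sh_\rho(q) = r(q)\cdot(\text{coefficient}) + \mathcal O(r^2)$ along this normal line.

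Next, the coefficient of $r$. After the substitution $P = 1 + LX$, I would expand each term in the $r$-coefficient of \eqref{HessianWeak} in powers of $L$. The key observation is that $Q_P$, $H_P(L_r, L_P)$, $H_r(L_r, L_P)$, and $N_r$-derivatives of $B_P$ and of $H_r(L_r,L_P)$ all carry at least one factor of $L$ (indeed $B_P = B_{LX} = L^2 B_X$ by Lemma \ref{BPscale}, and $H_{1+LX}(L_r, L_r) = L\,H_X(L_r, L_r)$ since the Hessian of the constant $1$ is zero), while the genuinely $L$-free pieces are $4KPu\re[N_r\sL_r](p)$ and $2P^2 u\re[N_r\sh_r](p)$ — but these are multiplied by $K$ or are $L$-independent and, crucially, the term $2K H_P(L_r,L_r)(p) = 2KL\,H_X(L_r,L_r)(p)$ is the one we want to dominate with. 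The cleanest route: fix $K$ first, group the $r$-coefficient as $2KL\,H_X(L_r,L_r)(p) + (\text{terms that are } O(L) \text{ but without the } K\text{-gain}) + (\text{terms of order } L^2) + 4KPu\re[N_r\sL_r](p) + 2P^2u\re[N_r\sh_r](p)$. At $p_0$ we have $H_X(L_r,L_r)(p_0) > 0$ by hypothesis (2), so by continuity there is a neighborhood of $p_0$ on which $H_X(L_r,L_r) \geq c > 0$. For fixed $K$, choose $L_0 = L_0(K)$ small enough that the $O(L^2)$ terms and the $L$-linear non-$K$ terms are dominated by $2KL c$; then choose $K$ large enough that $2KLc$ also absorbs the two $L$-free $N_r$-derivative terms (this works because those terms are bounded independently of $K$ and $L$, while $2KLc$ scales linearly in $K$ — one must be slightly careful and instead choose $L_0$ and $K_0 = K_0(L_0)$ in the order the statement dictates, fixing a small $L$ first, then $K$ large, which is fine since $2KLc \to \infty$ in $K$ with $L$ fixed). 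This yields that the $r$-coefficient is $\geq \tfrac{1}{2}K L c > 0$ near $p_0$, hence $\sh_\rho(q) \geq \tfrac{1}{2}KLc\, r(q) + \mathcal O(r^2) \geq 0$ for $q$ with $\pi(q) \in W$ close enough to $p_0$ (note $r(q) < 0$ inside $\Omega$, so one actually needs the coefficient to be... wait — let me re-read: we need $\sh_\rho \geq 0$ and $r < 0$ inside, so we in fact need the $r$-coefficient to be \emph{negative}; I would accordingly arrange the sign so that the dominant term $2KL\,H_X(L_r,L_r)(p)$ is used with the correct orientation, which forces $L < L_0$ with $L_0$ possibly negative, matching the ``$L < L_0$'' phrasing rather than ``$|L| < L_0$'' in the statement).

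The main obstacle I anticipate is precisely this sign bookkeeping together with the fact that the two $L$-free terms $4KPu\re[N_r\sL_r](p)$ and $2P^2u\re[N_r\sh_r](p)$ do not obviously have a favorable sign. The resolution is that $\sL_r \geq 0$ and $\sh_r \geq 0$ on $b\Omega$ (Lemmas; boundary psh) and both \emph{vanish} at weakly pseudoconvex points, so at $p \in W$ these functions attain interior-to-$\{\geq 0\}$ minima along $b\Omega$; their normal derivatives $\re[N_r\sL_r](p)$, $\re[N_r\sh_r](p)$ need not vanish, but any contribution of the wrong sign is linear in $K$ and can be beaten by the $KL$ term once $L$ is fixed — this is exactly why the quantification is ``first $L$, then $K$,'' and exactly why $K$ must be taken \emph{large}. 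Finally I would invoke \eqref{RHOww} to get $\rho_{w\bar w} > 0$ for $K$ large, and conclude by Sylvester's criterion that $\rho$ is plurisubharmonic at all such $q$ in a neighborhood of $p_0$.
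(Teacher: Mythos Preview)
Your proposal is correct and follows essentially the same route as the paper: use hypothesis~(1) together with Lemmas~\ref{L:CS}, \ref{L:SH}, and \ref{BPbound} to kill the constant-in-$r$ bracket at weakly pseudoconvex $p$, then make the $r$-coefficient in \eqref{HessianWeak} negative (so $r\cdot(\text{coeff})\geq 0$) by letting the term $2KH_P(L_r,L_r)(p)=2KL\,H_X(L_r,L_r)(p)$ dominate, with $L$ chosen sufficiently negative and then $K$ large.

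The only difference is organizational. The paper does not decompose in powers of $L$; it simply bounds the $K$-free part of the $r$-coefficient by a constant $C_1$ and the coefficient of the one surviving $K$-linear term $4KPu\,\re[N_rH_r(L_r,L_r)]$ by $C_2$, then takes $L\leq -\frac{(2+\epsilon)C_2}{\delta}$ (so that $2KL\delta$ overwhelms $4KC_2$ with margin $-\epsilon KC_2$) and $K\geq \frac{C_1}{\epsilon C_2}$. Your $L$-power bookkeeping reaches the same conclusion but with more moving parts; in a clean write-up you should drop the initial ``fix $K$ first'' detour and the mid-proof sign correction, and state directly that (i) the only $K$-linear terms in the $r$-coefficient at $p\in W$ are $4KPu\,\re[N_r\sL_r](p)$ and $2KL\,H_X(L_r,L_r)(p)$, (ii) choosing $L<L_0<0$ makes their sum $\leq -cK$ for some $c>0$, and (iii) taking $K\geq K_0(L)$ then absorbs the $K$-independent remainder and the $\mathcal O(r^2)$ error.
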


\begin{proof}
Suppose that there exists a real-valued function $P=1+X$ that satisfies (1) and (2).

Let $p\in W\cap U$ and $q\in \Omega$ with $\pi(q)=p$.

By continuity, there exists $\epsilon>0$ and a neighborhood $U_1\subset U$ of $p_0$ such that 
$$H_{1+X}(L_r,L_r)(p)>\delta>0, \text { for all $p\in U_1$.} $$

Starting with \eqref{HessianWeak} and using the assumption (1)
\begin{align}\label{HessianWeak1}
\sh_\rho(q)=r\bigg(&4KPu\re[NH_r(L_r,L_r)](p)+2P^2u\re[N\sh_r](p)\notag\\
&+4Pu\re[N_r\re[H_r(L_r,L_P)]](p)  +2u\re[N_rB_P](p)+PQ_P(p) \notag\\
&+2\re[H_P(L_r,L_P)](p)+2K H_P(L_r,L_r)(p)\bigg)+\mathcal O(r^2) .
\end{align}
In a sufficiently small neighborhood $U_2\subset U_1$ of $p_0$
\begin{align*}
2P^2u\re[N_r\sh_r](p)+4Pu\re[N_r\re[H_r(L_r,L_P)]](p)+2u\re[N_rB_P](p)&\\
+PQ_P(p)+2\re[H_P(L_r,L_P)](p)+2K H_P(L_r,L_r)(p)&\leq C_1
\end{align*}
is bounded and $C_1$ is independent of $K$ and
\[Pu\re[NH_r(L_r,L_r)](p)\leq C_2\] is bounded.
Picking $L\leq-\frac{(2+\epsilon)C_2}{\delta}$, where $\epsilon>0$. Recalling $r(q)<0$
\begin{align}
\sh_\rho(q)\geq& r( 4KC_2 +C_1 + 2K L H_{1+X}(L_r,L_r)(p))+\mathcal O(r^2) \notag\\
>& r(4KC_2 +C_1 -(4K+\epsilon K)C_2)+\mathcal O(r^2)\notag\\
\geq&r(-\epsilon KC_2+2C_1)\notag
\end{align}
in a sufficiently small neighborhood.
Finally, for $K\geq\frac{C_1}{\epsilon C_2}>0$
\[\sh_\rho(q)\geq -2C_1r(q)\geq0 \text{ for all $q\in U_2$ with $\pi(q)=p$ and $p\in W\cap U_2$} .\]
This shows that $\rho$ is plurisubharmonic at points lying on a normal line from weakly pseudoconvex points in some neighborhood of $p_0$.
\end{proof}

The assumption (2) in Proposition \ref{HessianWeak1} can be reduced to $H_P(L_r,L_r)(p_0)\neq0$ by the following argument.
By Lemma \ref{BPscale}, $Z(B_X)=Z(B_{LX})$. By linearity, $$H_{1+LX}(L_r,L_r)(p)=LH_{1+X}(L_r,L_r)(p).$$
Replacing $X$ by $-X$ if necessary, we may assume that $H_{1+X}(L_r,L_r)(p_0)>0$. This changes the interval of $L$ which produce plurisubharmonic function at points lying in the normal direction from $(-\infty,L_0)$ if $H_{1+X}(L_r,L_r)(p_0)>0$ to $(-L_0,\infty)$ if $H_{1+X}(L_r,L_r)(p_0)<0$. Call the appropriate interval $I_L$.

Combining the results at strongly pseudoconvex points and weakly pseudoconvex points the following is a sufficient condition:
\begin{theorem}\label{TheoremSuff}
Suppose there exists a real valued function $X$ and $P=1+X$ such that:
\begin{enumerate} 
\item $W\cap U\subset Z(B_P)$, i.e., for all $p\in W\cap U$ $L_r(P)(p)=0$,
\item $H_P(L_r,L_r)(p_0)\neq0$ with $I_L$ as the interval of $L$ values for which $\rho$ is plurisubharmonic at the points lying in the normal direction from weakly pseudoconvex points, and 
\item there exists $L\in I_L$ such that $-\sh_{r(1+LX)}|_{b\Omega}\leq C\sL_r$ for some constant $C$.
\end{enumerate}
Then there exists $K>0$ and $L\in I_L$ such that $\rho=r(1+Kr+LX)$ is plurisubharmonic in a neighborhood of $p_0$.
\end{theorem}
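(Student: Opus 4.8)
The plan is to assemble Theorem \ref{TheoremSuff} from the three pieces already established in the paper, partitioning the boundary into strongly and weakly pseudoconvex points and then patching the resulting local conclusions. First I would fix the candidate $X$ satisfying (1)--(3), and pick $L \in I_L$ as in (3); this single choice of $L$ must simultaneously serve the weakly pseudoconvex analysis and the boundary-plurisubharmonicity requirement, which is why (2) and (3) are phrased in terms of the \emph{same} interval $I_L$. With $L$ fixed, set $\widetilde X = LX$ and $\widetilde P = 1 + \widetilde X$, and observe $Z(B_{\widetilde P}) = Z(B_P)$ by Lemma \ref{BPscale}, so hypothesis (1) is inherited by $\widetilde P$.

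Next I would handle the weakly pseudoconvex points. Since $L \in I_L$, the discussion following Proposition \ref{Suff} (applying Proposition \ref{Suff} to $\widetilde X$, with the sign of $H_{\widetilde P}(L_r,L_r)(p_0)$ normalized appropriately) yields $L_0$ and $K_0$ so that for $K \geq K_0$ the function $\rho = r(1 + Kr + \widetilde X)$ satisfies $\sh_\rho(q) \geq 0$ at all $q$ on normal lines from weakly pseudoconvex points in some neighborhood $U_2$ of $p_0$. Separately, hypothesis (3) says $-\sh_{r(1+\widetilde X)}|_{b\Omega} \leq C\sL_r$, so Theorem \ref{BoundaryPSH} gives that $\rho = r(1 + Kr + \widetilde X)$ is plurisubharmonic on the boundary near $p_0$ for $K$ large; in particular, by Proposition \ref{Boundary}, $\sh_\rho(p) = 2K\widetilde P(p)\sL_r(p) + \sh_{(1+\widetilde X)r}(p) \geq 0$ for all $p \in b\Omega$ near $p_0$. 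For strongly pseudoconvex boundary points $p$ near $p_0$, Corollary \ref{C:S1} (applied with the defining function $r(1+\widetilde X)$, which is plurisubharmonic on the boundary by (3) and Theorem \ref{BoundaryPSH}) shows $\sh_\rho > 0$ in a full neighborhood of each such $p$; combined with \eqref{RHOww} this gives plurisubharmonicity of $\rho$ near every strongly pseudoconvex point.

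The remaining work is the patching: I need $\sh_\rho(q) \geq 0$ for \emph{all} $q$ in a neighborhood of $p_0$, not merely on normal segments from $W$ and near strongly pseudoconvex boundary points. The idea is to run a Taylor expansion in $r$ of $\sh_\rho$ as in \eqref{HessianFinal}, but now centered at an \emph{arbitrary} nearby boundary point $p = \pi(q)$ rather than only at weakly pseudoconvex ones. If $p$ is strongly pseudoconvex, the constant term $G_\rho(p) = 2K\widetilde P(p)\sL_r(p) + \sh_{r(1+\widetilde X)}(p) + (\text{lower-order in }\sL_r)$ is bounded below by a positive multiple of $\sL_r(p)$ once $K$ is large (using (3)), which dominates the $\mathcal O(r)$ remainder when $|r(q)|$ is small relative to $\sL_r(p)$; in the complementary regime $|r(q)|$ comparable to or larger than $\sL_r(p)$, $p$ is "nearly weakly pseudoconvex" and the coefficient-of-$r$ estimate from the proof of Proposition \ref{Suff} (which only used $\sL_r(p) = 0$ through Lemmas \ref{L:CS} and \ref{L:SH}, and degrades continuously) must be shown to still force $\sh_\rho(q) \geq 0$ for $K$ large. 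Making this two-regime dichotomy uniform is the main obstacle: one needs the constants $C_1, C_2, \delta$ from Proposition \ref{Suff} to be stable as $p$ ranges over a neighborhood of $W$ in $b\Omega$, and one needs a single threshold $K$ and neighborhood of $p_0$ that works across both regimes. I would carry this out by a compactness argument on a small closed boundary neighborhood of $p_0$: cover it by finitely many patches of the two types, extract uniform constants, take $K$ larger than all finitely many thresholds, and shrink to the intersection of the finitely many neighborhoods; together with \eqref{RHOww} this yields $H_\rho \geq 0$ throughout, completing the proof.
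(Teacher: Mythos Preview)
Your proposal follows essentially the same strategy as the paper's own proof: apply Proposition \ref{Suff} along normal lines from weakly pseudoconvex points, use hypothesis (3) with Theorem \ref{BoundaryPSH} to get strict plurisubharmonicity of $\rho$ on the boundary at strongly pseudoconvex points near $p_0$, then invoke Corollary \ref{C:S1} to push this into the interior, and finally combine. The paper's argument is considerably terser on the patching step---it simply asserts that Corollary \ref{C:S1} produces a single neighborhood of $p_0$ handling all normal lines from strongly pseudoconvex points and notes that increasing $K$ does not spoil the weakly pseudoconvex estimate---so your two-regime uniformity discussion goes beyond what the paper actually writes down, but the underlying approach is the same.
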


\begin{proof}

Since $H_P(L_r,L_r)(p_0)\neq 0$, Propositon \ref{Suff} gives an interval $I_L$ and $K_0$, such that for all $L\in I_L$ and $K\geq K_0$ $\rho=r(1+Kr+LX)$ is plurisubharmonic at points lying on a normal line from weakly psuedoconvex points in some neighborhood of $p_0$. 

Assumption (3) and Theorem \ref{BoundaryPSH} and guarantees that for some $L\in I_L$ and some $K$ big enough $\rho$ is strictly plurisubharmonic at strongly psedoconvex points in some neighborhood of $p_0$. Notice that increasing the value of $K$ if necessary does not destroy plurisubharmonicity at the points lying on a normal line from weakly pseudoconvex points. Since $\rho$ is strictly plurisubharmonic at strongly pseudoconvex points in some neighborhood of $p_0$, Corollary \ref{C:S1} guarantees there is a neighborhood of $p_0$ such that $\rho$ is plurisubharmonic at  every point lying on a normal line from strongly pseudoconvex points.

Therefore there is a neighborhood of $p_0$ such that $\rho$ is plurisubharmonic at every point as desired.

\end{proof}

\section{An Example }\label{S:Examples}

In this section the Theorem \ref{TheoremSuff} is demonstarted in action.
\\Let $$\Omega=\{r(z,w)=\re(w)+|w|^2+\re(w)|z|^2+|z|^2|w|^2+|z|^4+|z|^6<0\}\subset \C^2,$$ and $p_0=(0,0)$. The Levi form is given by $$\sL_r(z,w)=(1+|z|^2)|z|^2\left(4|\frac12 +w|^2+|z|^4\right).$$ Therefore, $\Omega$ is pseudoconvex and strongly pseudoconvex for boundary points $(z,w)$ with $z\neq0$, that is, $$W=b\Omega\cap \{z=0\}.$$
The complex Hessian $$\sh_r(z,w)=\begin{pmatrix} \re(w)+|w|^2+4|z|^2+9|z|^4 & \frac12z+z\bar w\\
\frac12\bar z+\bar zw&1+|z|^2 
\end{pmatrix}= \re w+|w|^2+\frac{15}{16}|z|^2+13|z|^4+9|z|^6$$ is positive semi-definite on the boundary
as  $$\re(w)+|w|^2=\frac{-|z|^4-|z|^6}{1+|z|^2}=-|z|^4$$ when restricted to the boundary. 
However, for any weakly pseudoconvex points $p=(0,w)\in W$, $$\sh_r(p)=\re(w)+|w|^2,$$ showing that $r$ cannot be plurisubharmonic in any neighborhood of $p_0$.

Let $$\rho=r(1+Kr+L|z|^2).$$ In the language of this paper $X=L|z|^2$ with $X_w=0$ and $X_z=L\bar z$.
\\The following computations show that all the hypothesis of Theorem \ref{TheoremSuff} are satisfied:
\\
\begin{enumerate}
\item $Z(B_P)=Z(|r_wX_{\bar z}|)=Z(|L||r_w||\bar z|)\supset \{z=0\}$, therefore $W\subset Z(B_P)$.\\
\item $H_P(L_r,L_r)(p_0)=L|r_w(p_0)|^2=\frac12L<0$ for $L<0$. Therefore $I_L=(-\infty,0)$.\\
\item for $L=-1\in I_L$, $$rP=r(1-|z|^2)=\re(w)+|w|^2+|z|^4-|z|^4\re(w)-|z|^4|w|^2-|z|^8$$ and $$\sh_{rP}=\begin{pmatrix}
4|z|^2-4|z|^2\re(w)-4|z|^2|w|^2-16|z|^6& -\bar z|z|^2-2\bar z|z|^2w\\
-z|z|^2-2z|z|^2\bar w&1-|z|^4
\end{pmatrix}=4|z|^2+o(|z|^2)=\mathcal O(\sL_r)$$ in a neighborhood of $p_0$.
\end{enumerate}
Therefore, by Theorem \ref{TheoremSuff} there exists $K\geq0$ such that  $\rho=r(1+Kr-|z|^2)$ is plurisubharmonic in some neighborhood of the origin. In fact, a direct computation of $\sh_\rho$ shows that $\rho$ is plurisubharmonic for any $K\geq0$.

\section{Higher order Taylor's formula}\label{S:HOTS}

Proposition \ref{Suff} gives a sufficient condition for $\rho=r(1+Kr+X)$ to be a local plurisubharmonic defining function. However it may be the case that $W\subset Z(B_X)$ and $H_X(L_r,L_r)(p)=0$ for all choices of $X$ and $p\in W$. The proposition then does not apply, but it is still possible that a plurisubharmonic $\rho$ may be constructed locally. To determine if that is the case, Taylor analysis to higher order needs to be considered. Each additional degree in the Taylor expansion imposes a new necessary condition, akin to Lemma  \ref{P:NWeak} and Lemma \ref{BPbound}.

From now on assume that $r$ is a real-analytic defining function for $\Omega$ and $r$ is plurisubharmonic on the boundary.
We introduce new notation to help us organize the calculations involving higher order Taylor approximations.
Denote by $A_kf$ the coefficient function of $r^k$ in the Taylor formula. That is,
\begin{align}\label{TaylorK}
f(q)=&\sum_{k=0}^\infty A_k(f)\frac{(-d_{b\Omega}(q))^k}{|\partial r(q)|^k} \notag\\
=&\sum_{k=0}^\infty A_k(f) u^k(q)r^k(q)
\end{align}
In Section \ref{S:Prel}, the first few $A_i(f)$'s were computed $A_0(f)=f(p)$, $A_1(f)= \re[N_rf](p)$, $A_2(f)=2\re[(N_rN_r)f](p)+\bar N_rN_rf(p)$, and so on. Also set $A_{-2}=A_{-1}=0$.

Equation \eqref{Hessian2} says
\begin{align}\label{Hessian4}
\sh_\rho=&\left(2KPH_r(L_r,L_r)+P^2\sh_r+2P\re[H_r(L_r,L_P)]+B_P\right)\notag\\
&+ r\bigg(4K^2H_r(L_r,L_r)+PQ_P+2\re[H_P(L_r,L_P)]+4KP\sh_r+4K\re[H_r(L_r,L_P)]\notag\\
&\qquad+2KH_P(L_r,L_r)\bigg)+r^2\left(4K^2\sh_r+\sh_P+2KQ_P\right).
\end{align}

Applying the Taylor expansion \eqref{TaylorK} to relevant terms in \eqref{Hessian4} and regrouping them according to the power of $r$
\begin{align}\label{TE1}
\sh_\rho(q)= \sum_{k=0}^\infty  r^k\bigg(& 2KPu^k A_k(H_r(L_r,L_r)) + P^2u^kA_k(\sh_r)+2Pu^kA_k(\re[H_r(L_r,L_P)]) \notag\\
&+u^kA_k(B_P)+4K^2u^{k-1}A_{k-1}(H_r(L_r,L_r))+Pu^{k-1}A_{k-1}(Q_P) \notag\\
&+2u^{k-1}A_{k-1}(\re[H_P(L_r,L_P)])+4KPu^{k-1}A_{k-1}(\sh_r) \notag\\
&+4Ku^{k-1}A_{k-1}(\re[H_r(L_r,L_P)])+2Ku^{k-1}A_{k-1}(H_P(L_r,L_r)) \notag\\
& +4K^2u^{k-2}A_{k-2}(\sh_r)+u^{k-2}A_{k-2}(\sh_P)+2Ku^{k-2}A_{k-2}(Q_P)\bigg).
\end{align}
Combining the like powers of $K$ in \eqref{TE1}
\begin{align}\label{TEFinal}
\sh_{\rho}(q)=\sum_{k=0}^\infty r^k\Bigg(& \bigg(P^2u^kA_k(\sh_r)+2Pu^kA_k(\re[H_r(L_r,L_P)])+u^kA_k(B_P) \notag\\
&\quad +Pu^{k-1}A_{k-1}(Q_P)+2u^{k-1}A_{k-1}(\re[H_P(L_r,L_P)])+u^{k-2}A_{k-2}(\sh_P)\bigg) \notag\\
&+K \bigg(2Pu^kA_k(H_r(L_r,L_r))+4Pu^{k-1}A_{k-1}(\sh_r)\notag\\
&\qquad\quad +4u^{k-1}A_{k-1}(\re[H_r(L_r,L_P)])+2u^{k-1}A_{k-1}(H_P(L_r,L_r))\notag\\
&\qquad\quad +2u^{k-2}A_{k-2}(Q_P)\bigg)\notag\\
&+K^2 \bigg(4u^{k-1}A_{k-1}(H_r(L_r,L_r))+4u^{k-2}A_{k-2}(\sh_r)\bigg)\Bigg)
\end{align}
rewrite the coefficient of each power of $r$ as a polynomial of $K$
\begin{align}\label{TEpolyK}
\sh_{\rho}(q)=&\sum_{k=0}^\infty r^k\bigg( F^0_k+KF^1_k+K^2F^2_k\bigg) = \sum_{k=0}^\infty r^kG_k,
\end{align}
where $G_k$ depends on $K$ and $P$.

Notice that $$F_k^2=4u^{k-1}A_{k-1}(H_r(L_r,L_r))+4u^{k-2}A_{k-2}(\sh_r)$$ is independent of the choice of $P$, while $F_0^2,F_1^2=0$.
As $F_k^2$ is the leading coefficient of the polynomial $G_k$  this term will have a great effect the range of $K$ for which $\rho$ is plurisubharmonic.

Our goal is to produce a function $P=1+X$ such that there exists a neighborhood  $U$ of $p_0$ such that for all $q\in U$ with 
$\pi(q)=p\in U\cap b\Omega$ there exists a positive integer $N$ such that
\begin{enumerate}
\item for all $k<N$, $G_k(p)=0$, and
\item $(-1)^NG_N(p)>0$.
\end{enumerate}
Then in a sufficiently small neighborhood $U^\prime$ of the $p$, for all $q\in \Omega\cap U^\prime$ with $\pi(q)=p$: 
$$\sh_\rho(q)=\sum_{k=N}^\infty r^kG_k = r^NG_N+\mathcal O (r^{N+1})\geq r^N(G_N-\frac12 G_N)=\frac12G_N r^N>0.$$
If $G_k=0$ for all $k\in \N$, then $\sh_\rho(q)=0$ on the line normal to $p$ and $\rho$ is plurisubharmonic at those points.  

\begin{remark}
Note that $N$ need not be the same for all $p\in b\Omega\cap U$. In fact, this was observed in Section \ref{S:pshonboundary} when considering strongly pseudoconvex points and weakly pseudoconvex points separately. 
\end{remark}

Conversely: If for any neighborhood $U$ of $p_0$ there exists $p\in b\Omega$, $q\in \Omega$ with $\pi(q)=p\in U$ and a positive inter $N$, such that:
\begin{enumerate}
\item for all $k<N$, $G_k=0$, and
\item $(-1)^NG_N<0$
\end{enumerate}
then $\rho$ is not plurisubharmonic in any neighborhood of $p_0$.
In other words, the same $P$ needs to satisfy the assumptions for all points $p\in b\Omega\cap U$ in some neighborhood $U$ of $p_0$ for $\rho$ to be plurisubharmonic in $U$.

Putting these statements together we obtain the following
\begin{theorem}\label{MainTaylorN}
Let $\Omega=\{r<0\}\subset \C^2$ with a defining function $r$ which is plurisubharmonic on the boundary and let $U$ be a neighborhood of $p_0$. Then $\Omega$ admits a local plurisubharmonic defining function near $p_0$ if and only if there exists a real-valued function $P=1+X$ and $K\in \R$ such that for all $p\in b\Omega\cap U$, there exists $N\in\N$ such that 
\begin{enumerate}
\item for all $k<N$, $G_k(p)=0$ and $(-1)^NG_N(p)>0$, or
\item  $G_k=0$ for all $k\in \N$.
\end{enumerate} 
\end{theorem}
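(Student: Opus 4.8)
The plan is to assemble Theorem \ref{MainTaylorN} directly from the Taylor-series decomposition \eqref{TEpolyK}, namely $\sh_\rho(q)=\sum_{k=0}^\infty r^k(q)\,G_k(q)$ with $G_k = F_k^0 + KF_k^1 + K^2F_k^2$, and from the observation that $\rho_{w\bar w}>0$ near $p_0$ once $K$ is large, so that (by Sylvester's criterion) plurisubharmonicity of $\rho$ near $p_0$ is equivalent to $\sh_\rho\geq 0$ near $p_0$. Since $r$ is assumed real-analytic, for fixed $q$ lying on the normal through $p=\pi(q)$ the series in $r(q)$ converges, and its behavior for small $|r(q)|$ is governed by the first nonvanishing coefficient $G_k(p)$.

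For the ``if'' direction I would argue pointwise-then-uniformly. Fix $P=1+X$ and $K$ as in the hypothesis. Take any $p\in b\Omega\cap U$. If alternative (2) holds at $p$, then $\sh_\rho(q)=0$ for every $q$ on the normal through $p$, so $\sh_\rho\geq 0$ there trivially. If alternative (1) holds with some $N=N(p)$, then $G_k(p)=0$ for $k<N$ and $(-1)^N G_N(p)>0$; by continuity of each $G_k$ (here real-analyticity of $r$, hence of the coefficient functions $A_j$ of \eqref{TaylorK}, is used) there is a neighborhood $U_p$ of $p$ and $\delta_p>0$ on which $|G_k|$ is small for $k<N$ while $(-1)^NG_N\geq \tfrac12(-1)^NG_N(p)>0$, and on which the tail $\sum_{k>N} r^k G_k$ is $\mathcal O(r^{N+1})$; thus, as displayed just before the theorem statement, $(-1)^N\sh_\rho(q) = (-1)^N r^N G_N + \mathcal O(r^{N+1}) \geq \tfrac12 (-1)^N r^N G_N(p)\geq 0$ for $q\in\Omega\cap U_p$ with $\pi(q)=p$ (recall $r(q)<0$ so $(-r(q))^N$ has the right sign). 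This gives $\sh_\rho\geq 0$ on a neighborhood of $b\Omega\cap U$ intersected with $\bar\Omega$; combined with $\rho_{w\bar w}>0$ for $K$ large and with the structure of the normal-coordinate map $\pi$ (every point of a sufficiently small one-sided neighborhood of $p_0$ lies on a normal through some nearby boundary point), one obtains a full neighborhood of $p_0$ on which $\rho$ is plurisubharmonic. The passage from the pointwise neighborhoods $U_p$ to a single neighborhood of $p_0$ is where one must be careful: one invokes that $b\Omega\cap \overline{U'}$ is compact for a small $U'$, extracts a finite subcover, and takes the minimum of the finitely many $\delta_p$'s; this is legitimate precisely because the hypothesis furnishes one function $P$ and one constant $K$ valid for all $p\in b\Omega\cap U$ simultaneously.

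For the ``only if'' direction, suppose $\rho=r(1+Kr+X)$ is a plurisubharmonic local defining function near $p_0$, i.e. $\sh_\rho\geq 0$ on some neighborhood $V$ of $p_0$, and set $P=1+X$. Fix any $p\in b\Omega\cap V$ and look along the inward normal. Either all $G_k(p)=0$, giving alternative (2), or there is a least $N$ with $G_N(p)\neq 0$; for $q$ on the normal at small distance, $\sh_\rho(q) = r^N(q)G_N(p) + \mathcal O(r^{N+1})$, and since $r(q)<0$ the sign of $\sh_\rho(q)$ agrees with that of $(-1)^N G_N(p)$ for $q$ close enough to $p$. Nonnegativity of $\sh_\rho$ forces $(-1)^N G_N(p)\geq 0$, hence $(-1)^N G_N(p)>0$ since $G_N(p)\neq 0$, which is alternative (1). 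This holds for every $p\in b\Omega\cap V$, so shrinking $U$ to $V$ completes the argument. (The contrapositive form stated in the excerpt — if for every neighborhood there is a bad $p$ with $(-1)^N G_N<0$ then $\rho$ fails to be plurisubharmonic anywhere near $p_0$ — is just the logical negation of this, and records that no choice of $P,K$ can rescue the situation.)

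\textbf{Main obstacle.} The genuine analytic content, and the step I expect to be delicate, is the uniformization in the ``if'' direction: turning the family of local estimates valid near each individual boundary point $p$ into a single estimate on a fixed neighborhood of $p_0$, given that the threshold index $N=N(p)$ may jump from point to point (as already seen with strongly versus weakly pseudoconvex points). One must check that the tail bound $\sum_{k>N}r^k G_k = \mathcal O(r^{N+1})$ is uniform on the finite subcover — this uses real-analyticity of $r$ to get locally uniform convergence of \eqref{TaylorK}, together with uniform bounds on $u$, $\partial r$, and the finitely many relevant derivatives of $r$ on a compact piece of $b\Omega$ — and that the lower-order coefficients $G_k$, $k<N(p)$, which vanish at $p$, remain small enough on $U_p$ not to overwhelm the leading term. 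Everything else is bookkeeping with \eqref{TEpolyK} and the elementary sign analysis of a convergent power series with a first nonzero coefficient.
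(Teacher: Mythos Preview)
Your proposal is correct and follows essentially the same approach as the paper: the paper's argument is precisely the sign analysis of the first nonvanishing coefficient $G_N$ in the normal-direction Taylor expansion \eqref{TEpolyK}, together with the observation that if all $G_k$ vanish then $\sh_\rho\equiv 0$ along that normal, and the contrapositive for the ``only if'' direction. In fact the paper presents this only as the informal discussion immediately preceding the theorem statement (``Putting these statements together we obtain the following''), so your write-up is more detailed than the paper's, and you correctly flag the uniformization step---patching the per-normal estimates into a single neighborhood when $N(p)$ may jump---as the genuine analytic issue, which the paper leaves implicit.
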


Each $G_k=0$ for $k=0,1,2,...,N-1$ imposes a necessary condition on $P=1+X$ and $K$.
In Section \ref{S:pshonboundary}, $G_0$ and $G_1$ for weakly pseudoconvex points were computed $$G_0=0 \text{ is equivalent to } W\subset (Z(B_P)) \text { and }$$
$$G_1=0 \text{ is equivalent to \eqref{HessianWeak1} vanishing to order $r^2$ },$$  
while $\rho$ remains strongly plurisubharmonic on the boundary at strongly pseudoconvex points. 

Since $G_k$ is a quadratic polynomial in $K$, the range of $K$ may be bounded above as well. Therefore $-\sh_{r(1+LX)}|_{b\Omega}\leq C\sL_r$ for some constant $C$ may not be sufficient condition for $\rho$ to plurisubharmonic on the boundary, as that may require to increase the value of $K$.

\bibliographystyle{acm}
\bibliography{BibMaster}

\end{document}